\documentclass[pdflatex,sn-mathphys-num]{sn-jnl}

\usepackage{graphicx}%
\usepackage{multirow}%
\usepackage{amsmath,amssymb,amsfonts}%
\usepackage{amsthm}%
\usepackage{mathrsfs}%
\usepackage[title]{appendix}%
\usepackage{xcolor}%
\usepackage{textcomp}%
\usepackage{manyfoot}%
\usepackage{booktabs}%
\usepackage{algorithm}%
\usepackage{algorithmicx}%
\usepackage{algpseudocode}%
\usepackage{listings}%
\usepackage{comment}
\usepackage{subcaption}

\theoremstyle{thmstyleone}%
\newtheorem{theorem}{Theorem}
\newtheorem{proposition}[theorem]{Proposition}%

\theoremstyle{thmstyletwo}%

\theoremstyle{thmstylethree}%

\begin{document}

\title[Article Title]{Bounds on Intrinsic Bayes Factors and Least Favorable Intrinsic Priors for General Statistical Hypothesis Testing}


\author*[1,2]{\fnm{Fernando E.} \sur{Betancourt}}\email{fernando.betancourt@upr.edu}

\author[3]{\fnm{Richard} \sur{Clare}}\email{nixordesigns@gmail.com}
\equalcont{These authors contributed equally to this work.}

\author[1]{\fnm{Luis R.} \sur{Pericchi}}\email{luis.pericchi@upr.edu}
\equalcont{These authors contributed equally to this work.}

\affil[1]{\orgdiv{Department of Mathematics}, \orgname{University of Puerto Rico }, \orgaddress{\street{Rio Piedras}, \city{San Juan}, \postcode{00925}, \state{PR}, \country{USA}}}

\affil[2]{\orgdiv{Department of Mathematics \& Physics}, \orgname{University of Puerto Rico }, \orgaddress{\street{} \city{Cayey}, \postcode{00736}, \state{PR}, \country{USA}}}

\affil[3]{\orgdiv{Infinite Trading LLC}, \orgname{} \orgaddress{\street{} \city{San Juan}, \postcode{00925}, \state{PR}, \country{USA}}}

\abstract{Hypothesis Testing is the most contentious procedure in statistical Methodology. P values rejects Null Hypotheses far too easily, specially for large samples. On the other hand, Bayes Factors depends on assumptions, for example regarding Intrinsic Bayes Factors, which average? Arithmetic, Geometric, Median? Our bound is the infimum over all the averages. We develop a lower bound on Intrinsic Bayes Factors that adjust authomatically with the sample size. Furthermore, we introduce the new idea of {\it{\textbf{Least Favorable Intrinsic Prior}}}, which corresponds to the least favourable possible training samples. The bound sets a bridge between Intrinsic Bayes Factors and Adrian Smith and David Spiegelhalter methodology.}

\keywords{Dynamic Bounds on Bayes Factors, Intrinsic Bayes Factors, Least Favorable Priors, Robust Bayesian Inference}



\maketitle

\section{Essence of the method}

 We propose a novel method that generates a lower bound for the
Bayes factor and corresponding priors, employing an approach that is closely related
to the concept of Intrinsic Bayes Factors and the technique employed by
\cite{spiegelhalter1982bayes}.
In general, Improper Priors can not be used for Bayes Factors. One powerful way out of it, is the use of real or imaginary "training samples".  
Let $y(\ell)$ denote a minimal training sample (whose posteriors are proper priors for all models), and let $\pi(\theta_i \mid y(\ell))$
represent the posterior distribution of parameter $\theta_i$, given the minimal training
sample for two models, $M_i$ and $M_j$. Our objective is to compare these two models,
$M_i$ and $M_j$. To achieve this, we define the set $D$ as the collection of all possible
minimal training samples of size $k$, where $k = \max\{\dim(\theta_i),\dim(\theta_j)\}.$
In this paper, we will explore various approaches to construct lower bounds for the
Intrinsic Bayes factor, ultimately contributing to the field of model comparison and
selection. There is a bridge between IBF and the Least Favorable Intrinsic Bayes Factor
approach. The starting point is the following Basic Lemma 1 from \cite{berger1996intrinsic}.\\
\[
B_{10}(y(-l)|y(l))=B^N_{10}({\textbf{y}})\times B^N_{01}(y(l)),
\]
where $y(-l)$ is the complement of the training sample. The dependence on the specific training sample is removed taking averages. Special status has the Arithmetic IBF ((asmptotically equivalent to to Intrinsic Priors): 
\begin{equation}
B_{10}^{AI}({\textbf{y}})
=
B_{10}^{N}({\textbf{y}})\times \frac{1}{L}\sum_{\ell=1}^{L} B_{01}^{N}\!\bigl(y(\ell)\bigr)
\tag{3}
\end{equation}
where $L=\binom{n}{m}$, where $m$ is the minimal training sample size. The AIBF is asymmetric;
the more complex model has to be placed above to guarantee the convergence. Before we continue we want to make a distinction in the notation that follows:\\
\textbf{Notation}: Let $D$ be the set of all possible empirical training samples $y(\ell)$ and let $D^*$ be the set of all possible theoretical training samples $y^*(\ell)$

We can first define the empirical Upper Bound (\(UB_E\)) and Lower Bound
(\(LB_E\)) as
\begin{equation}
UB_E
:=
B_{10}^{N}(\mathbf{y})
\max_{\ell=1,\ldots,L}
B_{01}^{N}\!\bigl(y(\ell)\bigr)
\ge
B_{10}^{N}(\mathbf{y})
\frac{1}{L}
\sum_{\ell=1}^{L}
B_{01}^{N}\!\bigl(y(\ell)\bigr)
=
B_{10}^{AI}(\mathbf{y}).
\tag{4}
\end{equation}

Similarly,
\begin{equation}
\begin{aligned}
LB_E
&:=
B_{01}^{N}(\mathbf{y})
\min_{\ell=1,\ldots,L}
B_{10}^{N}\!\bigl(y(\ell)\bigr) \\
&=
B_{01}^{N}(\mathbf{y})
\frac{1}{
\displaystyle
\max_{\ell=1,\ldots,L}
B_{01}^{N}\!\bigl(y(\ell)\bigr)
}
\\
&\le
B_{01}^{N}(\mathbf{y})
\frac{1}{
\displaystyle
\frac{1}{L}
\sum_{\ell=1}^{L}
B_{01}^{N}\!\bigl(y(\ell)\bigr)
}
=
B_{01}^{AI}(\mathbf{y}).
\end{aligned}
\tag{5}
\end{equation}

By definition, \cite{berger1996intrinsic}
\(B_{01}^{AI}=1/B_{10}^{AI}\). Moreover,
\(UB_E\ge B_{10}^{AI}\), since the maximum of the empirical correction factors
is necessarily greater than or equal to their arithmetic average. By taking
reciprocals, we similarly obtain
\[
B_{01}^{AI}\ge LB_E.
\]

We can now generalize these empirical bounds by enlarging the optimization
domain from the observed minimal training samples \(y(\ell)\) to the full
theoretical training-sample space \(D^*\). Let \(y^*(\ell)\) denote a
theoretical or imaginary training sample. Define
\begin{equation}
UB_T
:=
B_{10}^{N}(\mathbf{y})
\sup_{y^*(\ell)\in D^*}
B_{01}^{N}\!\bigl(y^*(\ell)\bigr),
\qquad
LB_T
:=
B_{01}^{N}(\mathbf{y})
\inf_{y^*(\ell)\in D^*}
B_{10}^{N}\!\bigl(y^*(\ell)\bigr).
\tag{6}
\end{equation}

Since the empirical training samples form a subset of the possible theoretical
training samples,
\[
\max_{\ell=1,\ldots,L}
B_{01}^{N}\!\bigl(y(\ell)\bigr)
\le
\sup_{y^*(\ell)\in D^*}
B_{01}^{N}\!\bigl(y^*(\ell)\bigr).
\]
Consequently,
\[
UB_E\le UB_T.
\]
By taking reciprocals, the ordering is reversed for the lower bounds
\[
LB_T\le LB_E.
\]

Therefore, the complete ordering is
\[
B_{10}^{AI}
\le
UB_E
\le
UB_T,
\]
and, equivalently,
\[
LB_T
\le
LB_E
\le
B_{01}^{AI}.
\]

Thus, the empirical bound is obtained directly from the observed minimal
training samples, whereas the theoretical bound is obtained by extending the
optimization to all possible minimal training samples in $D^*$. The empirical
lower bound is therefore tighter than the theoretical lower bound, while the
theoretical lower bound represents the most conservative bound over the full
training sample space.

In addition, the results
obtained with the Arithmetic Intrinsic Bayes Factor (AIBF) can be easily generalized to other
related Bayes Factors such as the Geometric (GIBF), Expected (EIBF), Harmonic (HIBF), and
Median (MIBF) Intrinsic Bayes Factors.

\section{Motivating Example, for IBF Lower Bounds}
\subsection{Normal Precision $h$ example: $H_0: h = h_0$ vs $H_1: h \neq h_0$, $\mu$ unknown}

Consider i.i.d. samples $\mathbf{y} = \{y_1, \dots, y_n\}$ where $y_i \sim N(\mu, h^{-1})$. To test $H_0: h=h_0$ against $H_1: h\neq h_0$ with $\mu$ unknown.\\
Applying Jeffreys' rule, the prior is $\pi(\mu,h) = C/h^r$, where $r=1/2$ for the dependent case and $r=1$ for the independent case ($\pi_I^J(\mu,h) \propto h^{-1}$). Under $H_0$, we assume $\pi^J_0(\mu) = C_0$. Adopting the independent prior ($r=1$) and integrating out the parameters, the Bayes Factor $B_{10}$ is:
\begin{equation}\label{B10}
B^N_{10}(\mathbf{y}) = \frac{C_1}{C_0} \left(\frac{2}{h_0 S_n^2}\right)^{\frac{n-1}{2}} \Gamma\left(\frac{n-1}{2}\right) \exp\left(\frac{h_0 S^2_n}{2}\right) , \quad S_n^2=\sum_{i=1}^n (y_i-\bar{y})^2
\end{equation}
In this context, we proceed with the independent prior $(r=1)$.

It is apparent from this expression that the Bayes Factor is undetermined, since it depends on the undefined ratio $C_1/C_0$, the undefined constants that come from the improper priors. It can be argued, from several points of view, that the constants with respect to location $\mu$ cancel out, leaving only the indeterminacy related to the hypothesis parameter $h$. Some of these points of view are, among others:
\begin{enumerate}
    \item Mean and Precision parameters are orthogonal in the Fisher Information Matrix, and thus "cancel-out" in the ratio of marginal likelihoods
    \item It turns out that two location models are predictively matched; see, for example, \cite{pericchi2005model} and when the scale is integrated into the denominator then models under both hypotheses become location models, and the corrections cancel out. \textbf{Important note}: this predictive matching property will be lost for larger than minimal training samples.
\end{enumerate}
 It is evident that the Bayes factor remains defined only up to the constant that pertains to the precision parameter under test. This is why Jeffrey's suggested conventional 
proper priors for the extra-parameters under the larger hypothesis (but improper for common parameters like $\mu$ here). There have been suggested techniques around this problem in \cite{smith1980bayes} and \cite{spiegelhalter1982bayes}. These techniques, although approximate, are useful in devising sensible scaling for Bayes Factors. In fact, this approach has a very direct relationship with more recent approaches that have been studied in detail, particularly the Intrinsic Bayes Factor, the Intrinsic Priors, and EP-priors.\\

In the example of the section, the minimal training sample consists of two observations $(y^*(\ell_1),y^*(\ell_2))$ and using Fundamental Lemma the Bayes Factor turns out to be
\begin{equation}\label{B10}
B^N_{10}(\mathbf{y})\cdot B^N_{0,1}(\mathbf y^*(\ell))=B^N_{10}\cdot (\frac{h_0 S^2_2}{2})^{1/2}\frac{\exp(-h_0 S^2_2/2)}{\Gamma(1/2)}\frac{C_0}{C_1}
\end{equation}
  Note that $S^2_2=\frac{d^*(\ell)^2}{2}$  where $d^*(\ell)=(y^*(\ell_1)-y^*(\ell_2)$.) It is apparent from (\ref{B10}) that the undefined constants cancel out. 
  The crucial step is: what to do about the (theoretical or imaginary) training samples summary statistics $d^*(\ell)$? Our practical and simplifying approach is to take: 
    \begin{equation}\label{Doptimal}
    \sup_{y^*(\ell) \in D^*} B^N_{01}(d^*(\ell)),
    \end{equation}
    which is attained at $\hat{d}^*=\sqrt{2/h_0}$. We can find this by looking at the minimum of $B^N_{10}(y^*(\ell)),$ 
    $$ B_{10}^N(y^*(\ell)) \propto \frac{1}{\exp\{-h_0 d^*(\ell)^2/4\}|d^*(\ell)|} = \frac{\exp\{h_0d^*(\ell)^2/4\}}{|d^*(\ell)|}$$

Taking logarithms on both sides and differentiating, we obtain the result. The lower bound is 
\[
LB_T(y)
=
\sqrt{2\pi e}\,
\left(\frac{h_0 S_n^2}{2}\right)^{(n-1)/2}
\frac{\exp\!\left(-\frac{h_0 S_n^2}{2}\right)}
{\Gamma\!\left(\frac{n-1}{2}\right)}.
\]
With this assignment, we argue this is a robust bound on the Bayes Factor in favor of the Null hypotheses. 
    To see the relationship with Intrinsic Bayes Factors the $\sup$ above is replaced by the arithmetic mean or by the theoretical expectation, on $y(\ell)$, under $H_1 \supset H_0$. 
\subsubsection{Comparing with Robust Bound}
We now compare with the Robust Bound \cite{sellke2001calibration}: 
    \[
    B_{01} \ge -ep\log(p)=LB_{01}(p).
    \]
    In order to calculate a suitable p-value, let's form the Likelihood Ratio:
\[
LR_{01}=(S^2_n h_0)^{n/2} \exp(-\frac{n}{2} (h_0 S^2_n - 1))
\]
Thus the Likelihood Ratio test Rejects iff $h_0 S^2_n>c_1$ or $h_0 S^2_n<c_2$, and $S^2_n h_0 \sim \chi^2(n-1)$ under $H_0$. Therefore a Likelihood Ratio test is equivalent to: $h_0 S^2_n>\chi^2_{1-\alpha/2}(n-1)$ or $h_0 S^2_n<\chi^2_{\alpha/2}(n-1)$. In Figure 1, we compare for fixed $p=0.05$ and sample size from 10 to 30, the (fixed) Robust Lower Bound (on $B_{01}$): $ -\exp(1)\cdot p \cdot \log(p)$, the $LB$ and Intrinsic Bayes Factor .  It can be argued that IBF and LB have the same asymptotic behavior, there is a small difference in that IBF is more precise. Still LB improves with $n$ and robust bound does not.  

\begin{figure}[h]
 \centering   \includegraphics[scale=0.22]{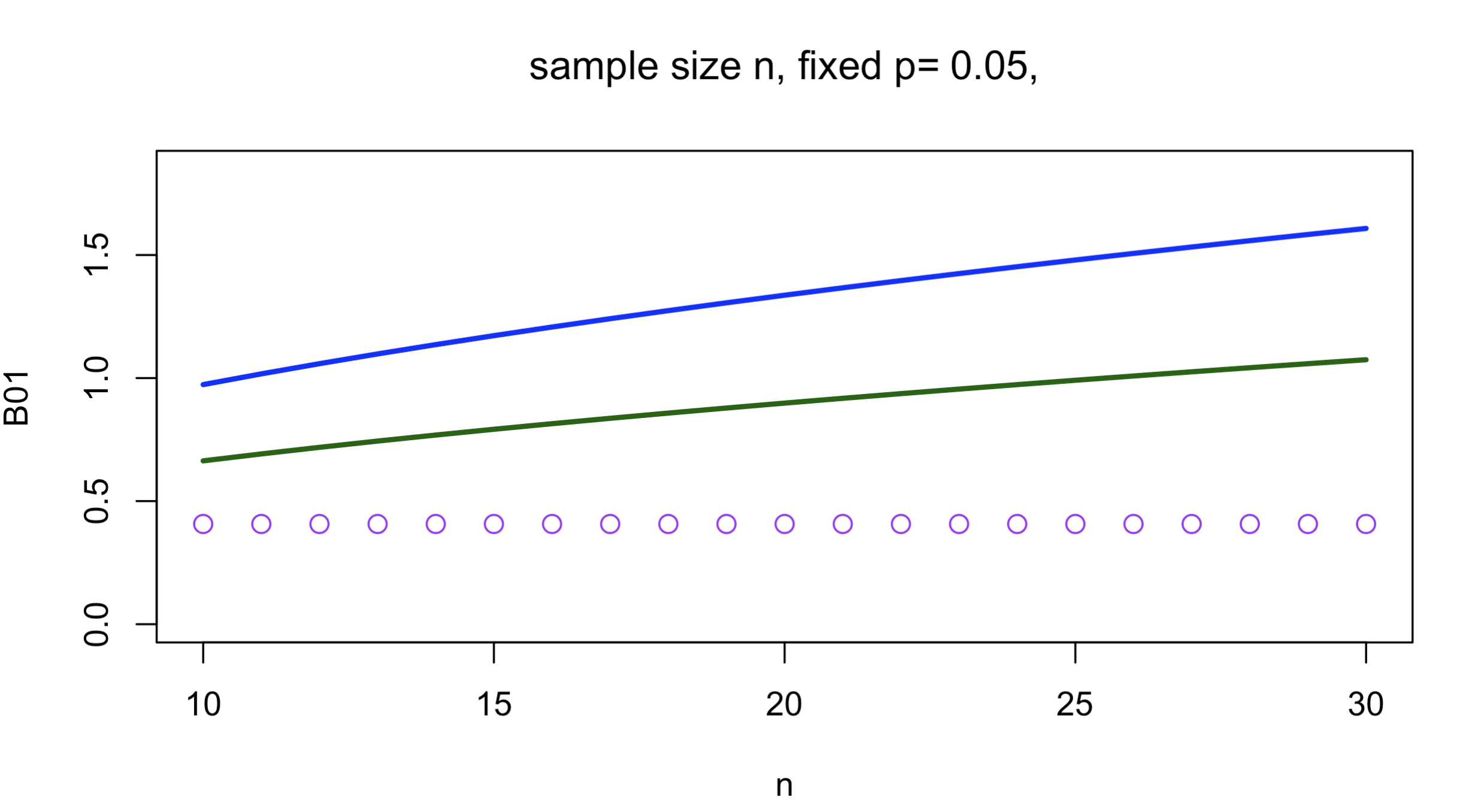}
    \caption{Three Different Lower Bounds on the Bayes Factors for fixed p and changing n:\\ $-ep\log(p)$(purple),  $LB$ (green) and IBF (blue)}
\end{figure}


\subsection{Normal mean hypothesis test example}

Assume that $y_i \sim N(\mu,\sigma_0^2)$ and consider the following hypothesis test
\[
H_0:\ \mu=\mu_0
\quad \text{vs} \quad
H_1:\ \mu\neq \mu_0,\ \sigma_0\ \text{known}.
\]
Hence, the non-informative prior is $\pi^{N}(\mu)=1$ and a minimal training sample is only
one observation $y^*(\ell)$, for any $\ell=1,\dots,n$.

\[
B_{01}^{N}\bigl(y^*(\ell)\bigr)
=
\frac{\dfrac{1}{\sqrt{2\pi}\sigma_0}
\exp\!\left\{-\frac{1}{2\sigma_0^{2}}\bigl(y^*(\ell)-\mu_0\bigr)^{2}\right\}}
{\displaystyle \int_{-\infty}^{\infty}\frac{1}{\sqrt{2\pi}\sigma_0}
\exp\!\left\{-\frac{1}{2\sigma_0^{2}}\bigl(y^*(\ell)-\mu\bigr)^{2}\right\}\,d\mu }.
\]

From above we can obtain that when $y^*(\ell)=\mu_0$ and $\sup B_{01}^{N}\bigl(y^*(\ell)\bigr)=\dfrac{1}{\sqrt{2\pi}\sigma_0}.$
Using this result we can compute the Bayes factor upper bound is given by
\[
UB_T
=
\frac{
\displaystyle \int_{-\infty}^{\infty}
\left(\frac{1}{\sqrt{2\pi}\sigma_0}\right)^{n}
\exp\!\left\{-\frac{1}{2\sigma_0^{2}}\left[
\sum_{\ell=1}^{n}\bigl(y(\ell)-\bar y\bigr)^{2}+n(\bar y-\mu)^{2}\right]\right\}\,d\mu
}{
\left(\frac{1}{\sqrt{2\pi}\sigma_0}\right)^{n}
\exp\!\left\{-\frac{1}{2\sigma_0^{2}}\sum_{\ell=1}^{n}\bigl(y^*(\ell)-\mu_0\bigr)^{2}\right\}
}
\times \sup B_{01}^{N}\bigl(y^*(\ell)\bigr).
\]

Simplifying the equation above we obtain
\[
UB_T
=
\frac{\sqrt{2\pi}\,\dfrac{\sigma_0}{\sqrt{n}}}{
\exp\!\left\{-\frac{1}{2\sigma_0^{2}}\,n(\bar y-\mu_0)^{2}\right\}
}
\,\sup B_{01}^{N}\bigl(y^*(\ell)\bigr)
=
\frac{\sqrt{2\pi}\,\dfrac{\sigma_0}{\sqrt{n}}}{
\exp\!\left\{-\frac{1}{2\sigma_0^{2}}\,n(\bar y-\mu_0)^{2}\right\}
}
\cdot \frac{1}{\sqrt{2\pi}\sigma_0}.
\]
Hence, by taking reciprocals  the IBF lower bound is obtained. We can write this lower bound in terms of the $p$-value. First notice that in terms of the $z$ statistic 
\[
LB_T=\sqrt{n}\exp{\{-z^2/2\}}=\sqrt{n}\exp{\{-qchisq(1-p,1)/2\}}
\]
where $qchisq()$ is the quantile function of the $\chi^2$ distribution. This formula is comparable to the bound in \cite{sellke2001calibration} , but this bound improves with the amount of information.  
\subsection{Normal hypothesis test $H_0:\mu=\mu_0$ vs $H_1:\mu\neq\mu_0$, $\sigma$ unknown}

Consider i.i.d. observations $Y_i$ under $M_0: N(\mu_0, \sigma_0)$ and $M_1: N(\mu, \sigma_1)$. While \cite{berger1998bayes} originally used $\pi_1^N(\mu, \sigma_1) \propto \sigma_1^{-1}$, this lead to a marginal $m_1(y^*(\ell)) = (2|y_1^*-y_2^*|)^{-1}$ for a minimal sample $y^*(\ell)$. Such a form can result in indeterminate Bayes Factor bounds as the density approaches zero or infinity.

 To circumvent this issue and ensure well-defined bounds
  we instead adopt the non-informative priors $\pi_0^N(\sigma_0) = \sigma_0^{-1}$ and $\pi_1^N(\mu, \sigma_1) = \sigma_1^{-2}$ also known as the modified Jeffreys prior. This choice yields the following stable marginals for the minimal sample:
\begin{equation}
m_0^N(y^*(\ell)) = \frac{1}{2\pi((y_1^*)^{2}+(y_2^*)^{2})}, \quad m_1^N(y^*(\ell)) = \frac{1}{\sqrt{\pi(y_1^*-y_2^*)^{2}}}
\end{equation} 

and the Bayes factor is
\[
B_{01}\bigl(y^*(\ell)\bigr)
=
\frac{m_0\bigl(y^*(\ell)\bigr)}{m_1\bigl(y^*(\ell)\bigr)}
=
\frac{(y_1^*-y_2^*)^2}{2\sqrt{\pi}\,((y_1^*)^2+(y_2^*)^2)}.
\]
\[
\sup B_{01}\bigl(y^*(\ell)\bigr)
=
\frac{m_0\bigl(y^*(\ell)\bigr)}{m_1\bigl(y^*(\ell)\bigr)}
=
\frac{(y_1^*-y_2^*)^2}{2\sqrt{\pi}\,((y_1^*)^2+(y_2^*)^2)}
=
\frac{1}{\sqrt{\pi}}.
\]

\[
UB_T
=
B_{10}(y)\,\sup B_{01}\bigl(y^*(\ell)\bigr)
=
\frac{m_1(y)}{\sqrt{\pi}\,m_0(y)}.
\]
We can compute $m_0$ and $m_1$ as in previous example (see suplemental material) and obtain 
the  Bayes Factor upper bound and Lower Bound is given by
\[
 LB_T
=
\sqrt{\frac{n}{2}}
\left(\frac{\sum (y_i-\bar y)^2}{\sum (y_i-\mu_0)^2}\right)^{n/2}=\dfrac{1}{UB_T}.
\]

\subsection{Linear regression example}

\subsubsection{Two nested normal-linear models}
Consider the comparison of two nested normal-linear models $M_0 \subset M_1$, defined as in \cite{smith1980bayes} by
$ M_i: \textbf{y} \sim N(\textbf{A}_i\theta_i,\sigma^2\textbf{I}_n), \hspace{10pt} i = 0,1$
where $\textbf{A}_i$ is a full rank $p_i$ known matrix, $\textbf{y}$ is a vector with dimension $n$, $\boldsymbol{\theta_i} = (\theta_{i1},...,\theta_{ip_i})$ is a vector of $p_i$ unknown parameters and $\sigma^2$ is unknown. This can be written in matrix notation as
$$ {\displaystyle \textbf{y} = \textbf{A}_i\theta_i + \epsilon_i}, i = 0,1$$
where $\boldsymbol{\epsilon}_i \sim N(\textbf{0},\sigma^2 I_n)$ for $i = 0,1$. Let
$ \hat{\theta}_i = (A_i^T A_i)^{-1}A_i^T\textbf{y} \mbox { and } \textbf{R}_i = |\textbf{y} - A_i\hat{\theta}_i|^2$
denote the least squares estimator of $\theta_i$ and the residual sum of squares, respectively.
The Bayes factor in this case is given by
$$B_{01} = \frac{\int \int p(\textbf{y}|\textbf{A}_0,\boldsymbol{\theta}_0,\sigma)p(\boldsymbol{\theta}_0,\sigma|\textbf{A}_0)d\boldsymbol{\theta}_0 d\sigma}{\int \int p(\textbf{y}|\textbf{A}_1,\boldsymbol{\theta}_1,\sigma)p(\boldsymbol{\theta}_1,\sigma|\textbf{A}_1)d\boldsymbol{\theta}_1 d\sigma}$$
We define generalized prior $\pi(\theta_i,\sigma) \propto {\sigma}^{-(1+q_i)}$ where different values of q will result in different well-known priors 
 According to \cite{berger1996intrinsic}, $q_i = 0$ is the reference prior (example above), and $q_i = p_1 - p_0$ is the modified Jeffrey's prior. In fact in the Reference prior the boun fails. \\

Spiegelhalter and Smith \cite{spiegelhalter1982bayes} showed that for Jeffreys prior 
and they calculated the Bayes factor
\begin{equation}\label{generallmbf}
B_{01} = \frac{c_0}{c_1}[|\textbf{A}_1^T \textbf{A}_1|/|\textbf{A}_0^T \textbf{A}_0|]^{\frac{1}{2}}\big[1 + \frac{(p_1 - p_0)}{(n-p_1)}F\big]^{-(n/2)}
\end{equation}
where $F$ is the $F$-test statistics for comparing $M_0$ and $M_1$ which is clearly indeterminate due to the ratio of undefined constants. 
The $F$ in the Bayes Factor above is given by the following identity $\frac{\textbf{R}_0}{\textbf{R}_1} = 1+ \Big(\frac{p_1 - p_0}{n-p_1}\Big)F$\\
In \cite{spiegelhalter1982bayes} p.379, proposed a satisfactory solution to the problem of determining the ratio $c_0/c_1$ by introducing the concept of an imaginary training sample. Let $\textbf{A}_0(\ell)$ and $\textbf{A}_1(\ell)$ be the design matrix of $M_0$ and $M_1$ occurring in the "thought experiment" generating the imaginary training sample, they obtained the following result

\begin{equation}\label{constants_ratio}
\frac{c_0}{c_1} = [|\textbf{A}_1(\ell)^T\textbf{A}_1(\ell)|/|\textbf{A}_0(\ell)^T \textbf{A}_0(\ell)|]^{-\frac{1}{2}}
\end{equation} 
In general, the marginal distribution for $n$ samples and $i = 0,1$ is given by
$$ m_i(\textbf{y}) = \displaystyle\int_{-\infty}^{\infty}\int_{0}^{\infty}\frac{c_i}{\sigma^ {1+q_i}}(\frac{1}{\sqrt{2\pi}\sigma})^{n}\exp\{-\frac{1}{2\sigma^2}\Big[\textbf{R}_i(\textbf{y}) + (\theta_i - \textbf{y})^T A_i^T A_i (\theta_i - \textbf{y})\Big]\}d\theta_i d\sigma$$
Where $\textbf{R}_i(\textbf{y}) = (\textbf{y}- A_i\hat{\theta_i})^T(\textbf{y}- A_k\hat{\theta_i})$. See Supplementary material for detail calculations, the marginal can be simplified to 
$$ m_i(\textbf{y}) \propto c_i\sqrt{|\textbf{A}_i^{T} \textbf{A}_i|^{-1}\hspace{3pt}[4\pi]^{p_i}
\hspace{3pt}\textbf{R}_i(\textbf{y})^{-(n+q_i - p_i)}}\hspace{3pt}$$
and the Bayes factor is given by
$$ B_{01}^N(\textbf{y}) = \frac{m_0(\textbf{y})}{m_1(\textbf{y})} = \frac{c_0}{c_1}\sqrt{[4\pi]^{(p_0 - p_1)}\frac{|\textbf{A}_1^{T} \textbf{A}_1|}{|\textbf{A}_0^{T} \textbf{A}_0|} \frac{[\textbf{R}_1(\textbf{y})]^{n+q_1 - p_1}}{[\textbf{R}_0(\textbf{y})]^{n+q_0 - p_0}}}$$

Now we will compute the Bayes factor for the minimal training sample. Since we need at least one observation per unknown parameter, we can define the minimal training sample size as $n_{01}= \max\{p_0 + 1,p_1 + 1\}$
and the Bayes factor of the minimal training sample can be  written as
$$ B_{10}^N(\textbf{y}(\ell)) = \frac{m_1(\textbf{y}(\ell))}{m_0(\textbf{y}(\ell))} = \frac{c_1}{c_0}\sqrt{[4\pi]^{(p_1 - p_0)}\frac{|\textbf{A}_0^{T}(\ell) \textbf{A}_0(\ell)|}{|\textbf{A}_1^{T}(\ell) \textbf{A}_1(\ell)|}\frac{[\textbf{R}_0(\textbf{y}(\ell))]^{n_{01}+q_0 - p_0}}{[\textbf{R}_1(\textbf{y}(\ell))]^{n_{01}+q_1 - p_1}}}$$
Using Basic Lemma from \cite{berger1996intrinsic}
$$ B_{01}(\textbf{y}(-\ell)|\textbf{y}(\ell)) =  \sqrt{\frac{|\textbf{A}_1^{T} \textbf{A}_1|}{|\textbf{A}_0^{T} \textbf{A}_0|}\frac{|\textbf{A}_0^{T}(\ell) \textbf{A}_0(\ell)|}{|\textbf{A}_1^{T}(\ell) \textbf{A}_1(\ell)|}\frac{[\textbf{R}_1(\textbf{y})]^{n+q_1 - p_1}}{[\textbf{R}_0(\textbf{y})]^{n+q_0 + p_0}}
\frac{[\textbf{R}_0(\textbf{y}(\ell))]^{n_{01}+q_0 - p_0}}{[\textbf{R}_1(\textbf{y}(\ell))]^{n_{01} +q_1 - p_1}}}$$

and the General Empirical Bayes factor lower bound is given by

$$LB_E =  \sqrt{\frac{|\textbf{A}_1^{T} \textbf{A}_1|}{|\textbf{A}_0^{T} \textbf{A}_0|}\frac{[\textbf{R}_1(\textbf{y})]^{n+q_1 - p_1}}{[\textbf{R}_0(\textbf{y})]^{n+q - p_0}}}
\displaystyle \min_{\ell = 1,...,L}\sqrt{\frac{|\textbf{A}_0^{T}(\ell) \textbf{A}_0(\ell)|}{|\textbf{A}_1^{T}(\ell) \textbf{A}_1(\ell)|} \frac{[\textbf{R}_0(\textbf{y}(\ell))]^{n_{01} +q_0 -p_0}}{[\textbf{R}_1(\textbf{y}(\ell))]^{n_{01} +q_1 - p_1}}}$$
 also, the General Theoretical BF lower bound can be written as
\begin{equation}\label{BFGSS}
LB_T=  \sqrt{\frac{|\textbf{A}_1^{T} \textbf{A}_1|}{|\textbf{A}_0^{T} \textbf{A}_0|}\frac{[\textbf{R}_1(\textbf{y})]^{n+q_1 - p_1}}{[\textbf{R}_0(\textbf{y})]^{n+q_0 - p_0}}\inf_{y(\ell) \in D}\frac{|\textbf{A}_0^{T}(\ell) \textbf{A}_0(\ell)|}{|\textbf{A}_1^{T}(\ell) \textbf{A}_1(\ell)|}
\frac{[\textbf{R}_0(\textbf{y}(\ell))]^{n_{01} +q_0 - p_0}}{[\textbf{R}_1(\textbf{y}(\ell))]^{n_{01} +q_1 - p_1}}}
\end{equation} 

Following the dimensional argument in \cite{smith1980bayes}, we can significantly simplify the ratio of the determinants. First, note that the infimum operator is taken with respect to the responses of the imaginary training sample, $y(\ell)$. The determinant terms $|\textbf{A}_i^{T}(\ell) \textbf{A}_i(\ell)|$, however, depend strictly on the design matrices and are independent of $y(\ell)$. Therefore, they act as positive constants with respect to the infimum and can be factored out

\begin{equation*}
LB_T= \sqrt{\frac{|\textbf{A}_1^{T} \textbf{A}_1|}{|\textbf{A}_0^{T} \textbf{A}_0|}\frac{|\textbf{A}_0^{T}(\ell) \textbf{A}_0(\ell)|}{|\textbf{A}_1^{T}(\ell) \textbf{A}_1(\ell)|}\frac{[\textbf{R}_1(\textbf{y})]^{n+q_1 - p_1}}{[\textbf{R}_0(\textbf{y})]^{n+q_0 - p_0}}\inf_{y(\ell) \in D} \frac{[\textbf{R}_0(\textbf{y}(\ell))]^{n_{01} +q_0 - p_0}}{[\textbf{R}_1(\textbf{y}(\ell))]^{n_{01} +q_1 - p_1}}}
\end{equation*}

Now we can simplify the product of the determinants. Suppose that the full data set of size $n$ can be viewed approximately as an $(n/n_{01})$-fold replicate of the minimal training sample of size $n_{01}$. Under this assumption, the information matrix for the full sample grows proportionally to the sample size ratio
$$ \textbf{A}_i^T \textbf{A}_i \approx \left(\frac{n}{n_{01}}\right) \textbf{A}_i^T(\ell) \textbf{A}_i(\ell) $$

By applying the standard property of determinants for a $p_i \times p_i$ matrix, multiplying the matrix by a scalar $c = (n/n_{01})$ scales the determinant by $c^{p_i}$. Therefore
$$ |\textbf{A}_i^T \textbf{A}_i| \approx \left(\frac{n}{n_{01}}\right)^{p_i} |\textbf{A}_i^T(\ell) \textbf{A}_i(\ell)| $$

Taking the ratio of these determinants for $M_1$ versus $M_0$ yields
$$ \frac{|\textbf{A}_1^T \textbf{A}_1|}{|\textbf{A}_0^T \textbf{A}_0|} \approx \left(\frac{n}{n_{01}}\right)^{p_1 - p_0} \frac{|\textbf{A}_1^T(\ell) \textbf{A}_1(\ell)|}{|\textbf{A}_0^T(\ell) \textbf{A}_0(\ell)|} $$

Substituting this result back into our factored determinant product:
$$ \left[ \left(\frac{n}{n_{01}}\right)^{p_1 - p_0} \frac{|\textbf{A}_1^T(\ell) \textbf{A}_1(\ell)|}{|\textbf{A}_0^T(\ell) \textbf{A}_0(\ell)|} \right] \frac{|\textbf{A}_0^T(\ell) \textbf{A}_0(\ell)|}{|\textbf{A}_1^T(\ell) \textbf{A}_1(\ell)|} = \left(\frac{n}{n_{01}}\right)^{p_1 - p_0} $$
Notice that the determinants of the training samples cross cancel completely.\\
Consequently, the General Theoretical BF lower bound can be rewritten without the need to compute the determinants of the design matrices for the minimal training samples. The expression simplifies to depend only on the sample sizes, parameter dimensions, and the residual sums of squares:

\begin{equation}\label{BFGSS_simplified}
LB_T \approx \left(\frac{n}{n_{01}}\right)^{\frac{p_1 - p_0}{2}} \sqrt{ \frac{[\textbf{R}_1(\textbf{y})]^{n+q_1 - p_1}}{[\textbf{R}_0(\textbf{y})]^{n+q_0 - p_0}} \inf_{y(\ell) \in D} \frac{[\textbf{R}_0(\textbf{y}(\ell))]^{n_{01} +q_0 - p_0}}{[\textbf{R}_1(\textbf{y}(\ell))]^{n_{01} +q_1 - p_1}} }
\end{equation}

\subsubsection{The One-way Layout (ANOVA)}

Assume that we have $m$ groups of observations with $n_i$ observations in the i-th group where
$$ y_{ij} \sim N(\mu_i,\sigma^2), i = 1,...,m, j = 1,...,n_i.$$
independently, given $\mu_1,...,\mu_m,\sigma^2$. Consider the following models
$$ M_0: \mu_1 = ... = \mu_m \mbox{ vs } M_1: \mu_i \neq \mu_j \mbox{ for some } i \neq j. $$

In this case, $p_1 = m, p_0 = 1,n = \sum_{i=1}^{m} n_i$ and is not difficult to write the general form of the matrices $\textbf{A}_0$ and $\textbf{A}_1$
where $A_0$ and $A_1$ are ($n \times m$) matrices and each block of ones in $A_1$ corresponds to each of the m groups, hence every block i has $n_i$ rows. Therefore,  $\det(\textbf{A}_0^T \textbf{A}_0) = n$, \\ and 

\begin{equation}\label{determinantM1}
\det(\textbf{A}_1^T \textbf{A}_1) = \prod_{i=1}^{m}n_i
\end{equation}
since $\textbf{A}_1^T \textbf{A}_1$ is diagonal matrix
\\
Therefore, we can easily calculate the term $|\textbf{A}_1^T \textbf{A}_1|/|\textbf{A}_0^T \textbf{A}_0|$ in equation \eqref{generallmbf} as
\begin{equation}\label{matrixdetermianants}
|\textbf{A}_1^T \textbf{A}_1|/|\textbf{A}_0^T \textbf{A}_0| = \frac{\prod_{i=1}^{n}n_i}{n}
\end{equation}
In this case, the minimal training sample requires at least one observation in each group plus one extra observation in any of the groups to be able to estimate $\sigma^2$. Therefore we require that $n_i = 1, i=1,...,j-1,j+1,...,n$ and $n_j = 2$ for some $j \in [1,m]$.

For the case mentioned above, we can use equation \eqref{constants_ratio}, and the result obtained in \eqref{determinantM1} to get $c_1$ as
$det(\textbf{A}_1(\ell)^T \textbf{A}_1(\ell)) = (1\cdot\cdot\cdot1\cdot2\cdot1\cdot\cdot\cdot1) = 2$ . Similarly, we can obtain $c_0$ as $\det(\textbf{A}_0(\ell)^T \textbf{A}_0(\ell)) = m + 1$

Hence, using this results we can calculate the expression in equation \eqref{constants_ratio} 
\begin{equation}\label{constants_ratio_mts}
\frac{c_0}{c_1} = \big(\frac{\det(\textbf{A}_0(\ell)^T \textbf{A}_0(\ell))}{\det(\textbf{A}_1(\ell)^T \textbf{A}_1(\ell))}\big)^{\frac{1}{2}} =  \big( \frac{m+1}{2} \big)^{\frac{1}{2}}
\end{equation}
Now we will compute the $LB_{01}$ in equation $\eqref{BFGSS}$ for ANOVA models. Since the $A_0(\ell)$ and $A_1(\ell)$ do not depend on the observations for ANOVA models, and the square root is a monotonic function, we can rewrite the expression as 
$$ \displaystyle \sup_{y(\ell) \in D_n}\sqrt{\frac{|\textbf{A}_0^{T}(\ell) \textbf{A}_0(\ell)|}{|\textbf{A}_1^{T}(\ell) \textbf{A}_1(\ell)|}\frac{[\textbf{R}_0(\textbf{y}(\ell))]^{n_{01} +q_i - p_0}}{[\textbf{R}_1(\textbf{y}(\ell))]^{n_{01} +q - p_1}}}  = \sqrt{\frac{|\textbf{A}_0^{T}(\ell) \textbf{A}_0(\ell)|}{|\textbf{A}_1^{T}(\ell) \textbf{A}_1(\ell)|} \displaystyle \sup_{y(\ell) \in D_n}\frac{[\textbf{R}_0(\textbf{y}(\ell))]^{n_{01} +q_i - p_0}}{[\textbf{R}_1(\textbf{y}(\ell))]^{n_{01} +q_i - p_1}}}
$$
and the General Theoretical Bound for ANOVA can be written as 
$$ UB_T =  \sqrt{\frac{|\textbf{A}_1^{T}(\ell) \textbf{A}_1(\ell)|}{|\textbf{A}_0^{T}(\ell) \textbf{A}_0(\ell)|}\frac{|\textbf{A}_0^{T} \textbf{A}_0|}{|\textbf{A}_1^{T} \textbf{A}_1|}\frac{[\textbf{R}_0(\textbf{y})]^{n+q_0 - p_0}}{[\textbf{R}_1(\textbf{y})]^{n+q_1 - p_1}}\sup_{y(\ell) \in D}
\frac{[\textbf{R}_1(\textbf{y}(\ell))]^{n_{01} +q_1 - p_1}}{[\textbf{R}_0(\textbf{y}(\ell))]^{n_{01} +q_0 - p_0}}}
$$  
 Similarly we can also get the bound for the empirical case. 
\subsubsection{ANOVA: Full Jeffrey's  $q_k = p_k,$, for $k = i,j$}
We have calculated above the General Empirical SS Bayes factor for ANOVA. Following the approach of\cite{spiegelhalter1982bayes}, we can choose the full Jeffrey's prior $q_i = p_i$, and using an imaginary training sample that provides the maximum support for $M_0$ which results in $F{_y(\ell)} = 0$, we obtain that the General Theoretical SS BF for ANOVA under the Full Jeffrey's prior
$$LB_E = \sqrt{\frac{m+1}{2}\big[\displaystyle\prod_{i=1}^{m}n_i\big/n\big]\Big(\frac{\textbf{R}_1(\textbf{y})}{\textbf{R}_0(\textbf{y})}\Big)^{n}\min_{\ell = 1,...,L}
\Big(\frac{\textbf{R}_0(\textbf{y}(\ell))}{\textbf{R}_1(\textbf{y}(\ell))}\Big)^{m+1}}
$$
Now expressing the ratio of the residual sum of squares as an F-statistic we obtain
$$LB_E=\sqrt{\frac{m+1}{2}\big[\displaystyle\prod_{i=1}^{m}n_i\big/n\big]\Big(1 + \frac{(m - 1)}{(n - m)}F_{y}\Big)^{-n}\min_{\ell = 1,...,L}
\Big(1 + \frac{(p_1 - p_0)}{(n_{01} - p_1)}F_{y(\ell)}\Big)^{m+1}}$$

The General Theoretical Bayes factor bound for ANOVA is given by
$$UB_T=\sqrt{\frac{2}{m+1}\big[\displaystyle n\big/\prod_{i=1}^{m}n_i\big]\Big(1 + \frac{(m - 1)}{(n - m)}F_{\textbf{y}}\Big)^{n}\sup_{y(\ell) \in D}
\Big[\frac{\textbf{R}_1(\textbf{y}(\ell))}{\textbf{R}_0(\textbf{y}(\ell))}\Big]^{m+1}}$$

Note that $\textbf{R}_0(y(\ell)) \geq \textbf{R}_1(y(\ell))$, then following our approach we have that 
$$ \sup_{y(\ell) \in D}
\frac{[\textbf{R}_1(\textbf{y}(\ell))]}{[\textbf{R}_0(\textbf{y}(\ell))]} = 1 \Longrightarrow\sup_{y(\ell) \in D}
\Big[\frac{\textbf{R}_1(\textbf{y}(\ell))}{\textbf{R}_0(\textbf{y}(\ell))}\Big]^{m+1} = 1, \hspace{5pt}, \forall m > 0 $$ 
and the General Theoretical Bayes Factor Bound under the full Jeffrey's prior for ANOVA models becomes
$$ UB_T=\sqrt{\frac{2}{m+1}\big[\displaystyle n\big/\prod_{i=1}^{m}n_i\big]\Big(1 + \frac{(m - 1)}{(n - m)}F_{\textbf{y}}\Big)^{n}} = $$

$$ UB_T =  \sqrt{\frac{2}{m+1}\big[\displaystyle n\big/\prod_{i=1}^{m}n_i\big] \Big[\frac{\textbf{R}_0(\textbf{y})}{\textbf{R}_1(\textbf{y})}\Big]^n}
$$ 
\subsubsection{ANOVA: Modified Jeffrey's Prior ($q_j = p_j - p_i, q_i = 0$)} The General Empirical SS Bayes Factor for ANOVA models under the Modified Jeffrey's prior is given by 
$$ UB_E = \sqrt{\frac{2}{m+1}\big[n\big/\displaystyle\prod_{i=1}^{m}n_i\big]\Big(\frac{\textbf{R}_0(\textbf{y})}{\textbf{R}_1(\textbf{y})}\Big)^{n-1}\max_{\ell = 1,...,L}
\Big(\frac{\textbf{R}_1(\textbf{y}(\ell))}{\textbf{R}_0(\textbf{y}(\ell))}\Big)^{m}}
$$ 
The General Theoretical SS Bayes Factor for ANOVA models under the Modified Jeffrey's prior is given by 
$$ UB_T = \sqrt{\frac{2}{m+1}\big[n\big/\displaystyle\prod_{i=1}^{m}n_i\big]\Big(\frac{\textbf{R}_0(\textbf{y})}{\textbf{R}_1(\textbf{y})}\Big)^{n-1}\sup_{y(\ell) \in D}
\Big(\frac{\textbf{R}_1(\textbf{y}(\ell))}{\textbf{R}_0(\textbf{y}(\ell))}\Big)^{m}}
$$ 
$$ UB_T = \sqrt{\frac{2}{m+1}\big[n\big/\displaystyle\prod_{i=1}^{m}n_i\big]\Big(1 + \frac{(m - 1)}{(n - m)}F_{y}\Big)^{n-1}} $$ 
\subsubsection{Conclusions from the ANOVA case}
\begin{enumerate}
    \item For the Full Jeffreys and Modified Jeffreys the SS bounds are informative useful and close to each other. The point is that under both priors the BF can be expressed as a function of the F-Statistics. 
    \item However for the Reference Prior ($q_k = 0, k = i,j$), it can be shown that  bound fails, as it is uninformative. It is curious that the bound is so sensitive to the initial objective prior.
    \item This adds to the conclusion of Berger and Pericchi (1996) and elsewhere, that the Modified Jeffreys prior is the better choice, at least in the Linear Gaussian Model, and perhaps much more broadly.
    \item Perhaps it would be interesting to explore to which extend the Modified Jeffreys prior, has additional properties of matching, besides simplicity, and one-one relationship with F-Statistics.
\end{enumerate}
\section{Least Favorable Priors}\label{sec3}
Section 4.1 will present the ideas of what are least favorable prior (LF priors) and least favorable Bayes factors (LF Bayes Factors). Following with the methodology we continue with the examples already presented in section 3 and present some results within those examples. 
\subsection{Methodology} \label{subsec1}
We have discussed in previous sections the concept of obtaining a bound for the intrinsic Bayes factor. In the process it is shown that we have to compute a extrema on the correction factor that is 
$\{ \arg\sup_{y(\ell)}\ B_{01}^N\!\big(y(\ell)\big)\}$ or $\{ \arg\inf_{y(\ell)}\ B_{10}^N\!\big(y(\ell)\big)\}$  we denote the training sample that produces this extrema $y^s(\ell)$. The training sample \(y^s(\ell)\) may be empirical or theoretical, depending on whether the extremum is taken over observed training samples or over the full training-sample space. For simplicity in notation from now on in our examples all trainings samples are theoretical We define the LF prior as the prior defined on $y^s(\ell)$, more specifically

\begin{equation}
\pi_k^{\mathrm{LF}}(\theta_k)
\;:=\;\pi_k^N\!\big(\theta_k \mid y^s(\ell)\big)
\;=\;\frac{f_k\!\big(y^s(\ell)\mid \theta_k\big)\,\pi_k^N(\theta_k)}
            {m_k^N\!\big(y^s(\ell)\big)}
\end{equation}
Using this $\pi_k^{\mathrm{LF}}(\theta_k)$ we define the LF Bayes factor 

\begin{equation}
B_{01}^{\mathrm{LF}}(y)
\;=\;\frac{m_0^{\mathrm{LF}}(y)}{m_1^{\mathrm{LF}}(y)}
\end{equation}
where $m_k^{LF}$ are just the marginals under $\pi_k^{\mathrm{LF}}(\theta_k)$.

\subsection{  Normal precision Example (Continuation)}
Recall that the bound is attained at $\hat{d}=\sqrt{2/h_0}$, so we take training samples of size two i.e $y(l)=(y_1,y_2)$ that satisfy this condition and compute the LF prior. We first compute the joint on the training sample
\begin{equation}
\pi(\mu,h|y(\ell)) = \frac{f(y(\ell)|\mu,h)\pi(\mu)\pi(h)}{\int f(y(\ell)|\mu,h)\pi(\mu)\pi(h)d\mu dh} =  \frac{\frac{1}{h}\frac{h}{2\pi}\exp\{-\frac{h}{2}[(y(1) - \mu)^2 + (y(2) - \mu)^2]\}}{\frac{1}{2\pi}\displaystyle\int  \exp\{-\frac{h}{2}(\frac{d(\ell)^2}{2} + 2(\mu - \bar{y}(\ell))^2 \}d\mu d h} 
\end{equation}
Hence,
\begin{equation}
\pi(\mu,h|d=\hat{d}) = \frac{\exp\{-\frac{h}{2}[\frac{\hat{d}^2}{2} + 2(\mu - \bar{y}(\ell))^2] \}\hat{d}}
{2\pi}
\end{equation}
Integrating with respect to $\mu$ we obtain the prior for $h$,
\\
$$  \pi^{LF}(h|d = \sqrt{\frac{2}{h_0}}) = \frac{\sqrt{\frac{2}{h_0}}\exp\{\frac{-h(\sqrt{\frac{2}{h_0}})^2}{4}\}}{\sqrt{2\pi}\sqrt{2h}} = \frac{1}{\sqrt{2\pi hh_0}}\exp\{-\frac{h}{2h_0}\}$$
Hence, $\pi^{LF}(h)$ is $Gamma(\alpha = 1/2,\beta = 2h_0)$.

$$ \pi^{LF}(\mu|d = \sqrt{\frac{2}{h_0}}) = \frac{8\pi}{\frac{2}{h_0} + 4(\mu - \bar{y}(\ell))^2} = \frac{2\pi}{\frac{1}{2h_0} + (\mu - \bar{y}(\ell))^2} = \frac{4h_0\pi}{1 + (\frac{\mu - \bar{y}(\ell)}{\sqrt{2h_0}})^2} $$
The distribution of $\mu$ for $B_{01}$ is proportional to a $Cauchy(\bar{y}(\ell),\gamma)$ where $\gamma = \sqrt{2h_0}$. 
$$  \pi^{LF}(\mu) \propto \frac{1}{\pi \gamma\Big(1 + (\frac{\mu - \bar{y}(\ell)}{\gamma})^2\Big)}  $$
Now as a comparison consider the intrinsic priors for the same hypothesis test above, 
$$  \pi^{I}(h) \propto \frac{1}{\pi}\frac{\sqrt{\frac{h}{h_0}}}{\frac{h+1}{h_0}}\frac{1}{h} = \frac{1}{\pi h_0}\frac{(\frac{h}{h_0})^{1/2 -1}}{(\frac{h}{h_0} + 1) }$$

Hence, $\pi^{I}(h)$ is  $SBeta2(h|p=1/2,q=1/2,b=h_0)$, which is a Scale beta2.\\
\textbf{Comment}: The IPrior for $\mu$ is improper but well-calibrated (see \cite{pericchi2005model}). On the other hand, the $LF$ prior for $\mu$ is proper for both hypotheses, and not the same. 
In fact it looks that the prior on $\mu|H_0$ is Normal, with precision $2\cdot h_0$ and mean $\bar{y}^s(\ell)$ \\
\\
With the priors above, we can compute the marginals and obtain the Least Favorable Bayes Factor
\begin{equation}
B_{01}^{\mathrm{LF}}(y)
= \frac{m_0^{\mathrm{LF}}(y)}{m_1^{\mathrm{LF}}(y)}
= \frac{\sqrt{\pi}\,h_0^{(n+1)/2}}{2^{n/2}\,\Gamma\!\big(\tfrac{n+1}{2}\big)} \;
\frac{\Big(S_n^2+\dfrac{1}{h_0}+\dfrac{2n}{n+2}(\bar y-\bar y^s(\ell))^2\Big)^{(n+1)/2}}
{\exp\!\Big\{\dfrac{h_0}{2}\Big[S_n^2+\dfrac{2n}{n+2}(\bar y-\bar y^s(\ell))^2\Big]\Big\}}
\end{equation}
\textbf{Comment}:Expression (21) depends on the choice of a particular training sample that satisfies the condition $\hat{d}=\sqrt{2/h_0}$, which defines an "orbit" of training samples. The natural choice of a point in that orbit is the "centered"  $\bar y^s(\ell)=\bar{y}$. This choice makes $B^{LF}_{01}$ close to LB (see section 4), it is quite robust with respect to small changes of $y^s(\ell)$ and leads to the following natural simplification,
\begin{equation*}
B_{01}^{\mathrm{LF}}(y)
= \frac{\sqrt{\pi}\,h_0^{(n+1)/2}}{2^{n/2}\,\Gamma\!\big(\tfrac{n+1}{2}\big)}
\left(S_n^2+\frac{1}{h_0}\right)^{(n+1)/2}
\exp\!\left(-\frac{h_0}{2}S_n^2\right).
\end{equation*}
Also, if you take $\displaystyle\lim_{n\to \infty}\dfrac{LB}{B_{01}^{LF}}\to 1$ under $H_0$ which implies that both are close under $H_0$ if you choose $\bar y^s(\ell)=\bar{y}$.

\subsection{Normal Mean hypothesis test (Continuation) }
From previous example we know $y^s(\ell)=\mu_0$.

We now proceed to compute the LF prior. First under the null the parameter space is empty so one may write $\pi_0^{\mathrm{LF}}$ as the unit mass on the
empty parameter while $
m_0^N\!\big(y^s(\ell)\big)=f_0\!\big(y^s(\ell)\big)
=\frac{1}{\sqrt{2\pi}\,\sigma_0}.$
 Under $H_1$ 
with $y^s(\ell)=\mu_0$ and $\pi_1^N(\mu)=1$ we get 
\[
\pi_1^{\mathrm{LF}}(\mu)
=\frac{\phi\!\left(\frac{\mu_0-\mu}{\sigma_0}\right)}
       {\int_{-\infty}^{\infty}\phi\!\left(\frac{\mu_0-\mu}{\sigma_0}\right)\,d\mu}
=\frac{1}{\sqrt{2\pi}\,\sigma_0}\exp\!\left\{-\frac{(\mu-\mu_0)^2}{2\sigma_0^2}\right\}.
\]
It can be clearly seen that the LF prior for $\mu$ follows a normal distribution with mean $\mu_0$ and variance $\sigma_0^2$. Now we procede to compute the LF Bayes factor. Let $\Delta=\bar{y}-\mu_0$ and $\mu^*=\dfrac{n\bar y+\mu_0}{n+1}$
\begin{equation*}
\begin{split}
m_1^{\mathrm{LF}}(y)
& =\int_{-\infty}^{\infty} f_1(y\mid\mu)\,\pi_1^{\mathrm{LF}}(\mu)\,d\mu \\
& = 2\pi\sigma_0^2)^{-(n+1)/2}
\int_{-\infty}^{\infty}
\exp\!\left\{-\frac{1}{2\sigma_0^2}\Big(S_1^2+n(\bar y-\mu)^2+(\mu-\mu_0)^2\Big)\right\}d\mu. \\
&=(2\pi\sigma_0^2)^{-(n+1)/2}
\exp\!\left\{-\frac{1}{2\sigma_0^2}\Big(S_1^2+\frac{n}{n+1}\Delta^2\Big)\right\}
\int_{-\infty}^{\infty}
\exp\!\left\{-\frac{n+1}{2\sigma_0^2}(\mu-\mu^*)^2\right\}d\mu. \\
& =(2\pi\sigma_0^2)^{-n/2}\,(n+1)^{-1/2}\,
\exp\!\left\{-\frac{S_1^2}{2\sigma_0^2}
-\frac{n}{2\sigma_0^2(n+1)}\,\Delta^2\right\}
\end{split}
\end{equation*}
Hence 
\begin{equation}
B_{10}^{\mathrm{LF}}(y)
=\frac{m_1^{\mathrm{LF}}(y)}{m_0(y)}
=\frac{1}{\sqrt{\,n+1\,}}\,
\exp\!\left\{\frac{n^2}{2\sigma_0^2(n+1)}\,\Delta^2\right\}
\end{equation}
Equivalently, with $z=\sqrt{n}\,\Delta/\sigma_0$,
\[
B_{01}^{\mathrm{LF}}(z)=\sqrt{\,n+1\,}\,
\exp\!\left\{-\frac{n}{n+1}\,\frac{z^2}{2}\right\}.
\]
\subsubsection{Intrinsic Bayes Factor vs Least Favorable Prior}
From \cite{berger1996intrinsic},
The intrinsic Bayes factor in favor of \(H_0:\mu=\mu_0\) versus
\(H_1:\mu\neq\mu_0\) is
\[
\mathrm{B}_{01}^{IP}(z)
=
\sqrt{2n+1}
\exp\!\left(
-\frac{2n}{2n+1}\frac{z^2}{2}
\right).
\]
The next plot graphs both Bayes Factors. It can be notice that both show similar asymptotic behavior. The next proposition shows a relationship between both, we would add the prove to the suplementary material.  

\begin{figure}[htbp]
     \centering
     \begin{subfigure}[b]{0.48\textwidth}
         \centering
         \includegraphics[width=\textwidth]{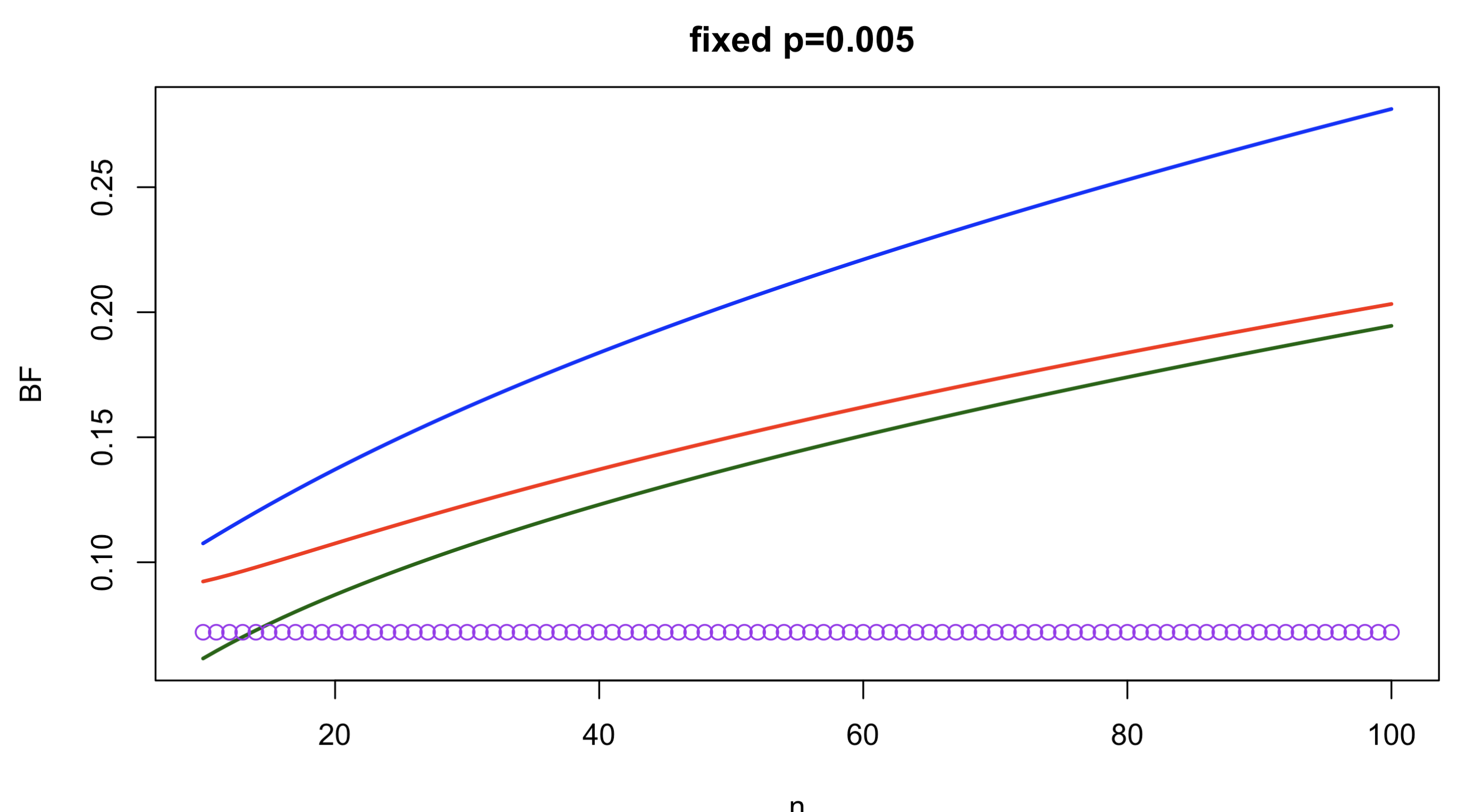}
         \caption{Fixed $p$, changing $n$.}
         \label{fig:known_var_n}
     \end{subfigure}
     \hfill
     \begin{subfigure}[b]{0.48\textwidth}
         \centering
         \includegraphics[width=\textwidth]{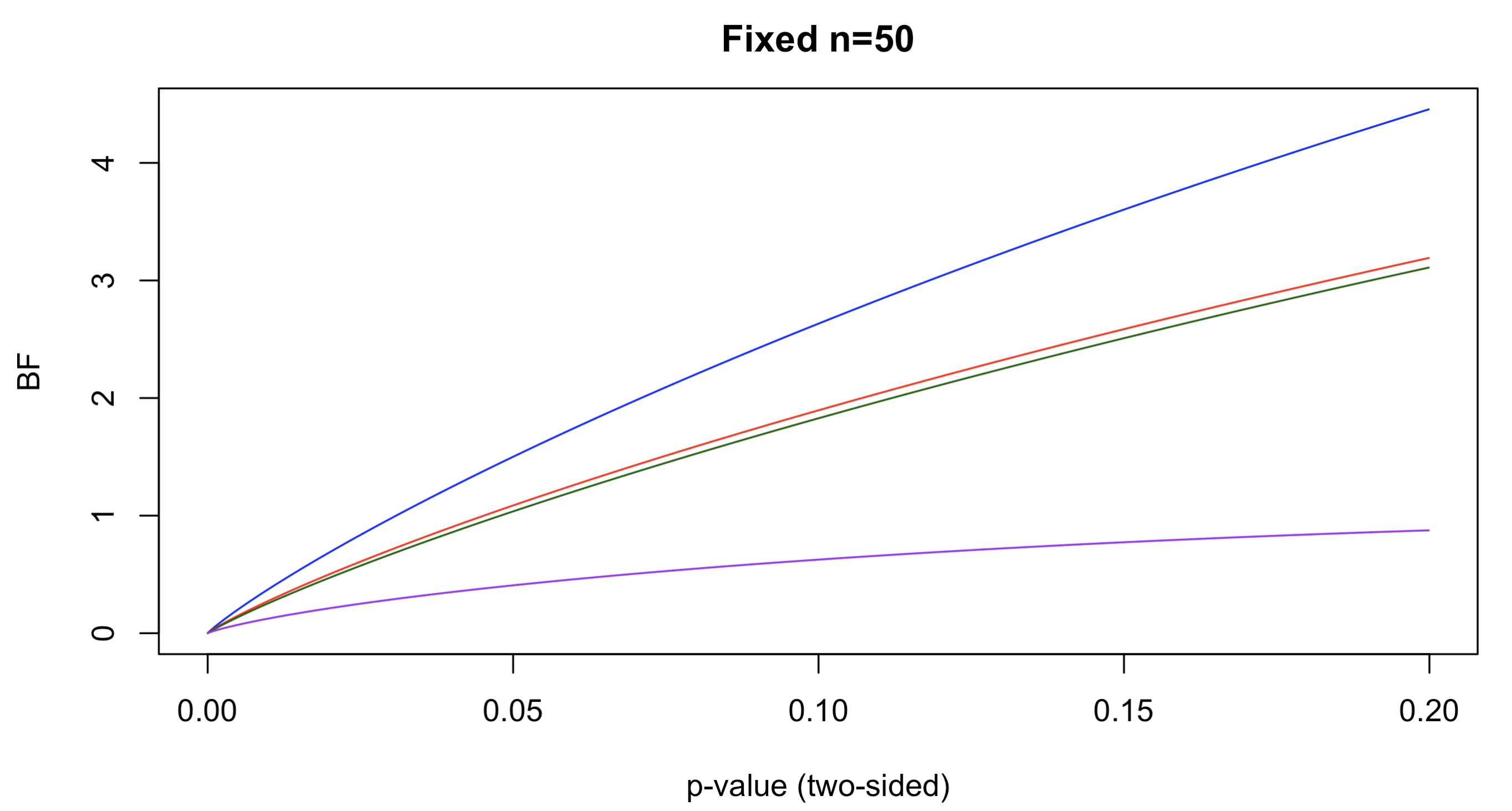}
         \caption{Fixed $n$, changing $p$.}
         \label{fig:known_var_p}
     \end{subfigure}
     
     \caption{Comparison of Lower Bounds and Bayes Factors: $-ep\log(p)$ (purple), $LB$ (green), $B_{01}^{LF}$ (red), and the intrinsic Bayes Factor (blue).}
     \label{fig:known_variance_comparison}
\end{figure}
\begin{proposition}

For fixed \(p\)-value 
\[
\frac{B_{01}^{\mathrm{IP}}(z)}{B_{01}^{\mathrm{LF}}(z)}
\longrightarrow
\sqrt{2},
\qquad n\to\infty.
\]
\end{proposition}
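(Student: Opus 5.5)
Fixing the $p$-value fixes $z$: for the two-sided test, $z^2=\texttt{qchisq}(1-p,1)$, which does not depend on $n$. So we compare the two closed forms stated above, $B_{01}^{\mathrm{IP}}(z)=\sqrt{2n+1}\exp\{-\tfrac{2n}{2n+1}\tfrac{z^2}{2}\}$ and $B_{01}^{\mathrm{LF}}(z)=\sqrt{n+1}\exp\{-\tfrac{n}{n+1}\tfrac{z^2}{2}\}$, at this fixed $z$. Their ratio factors as
\[
\frac{B_{01}^{\mathrm{IP}}(z)}{B_{01}^{\mathrm{LF}}(z)}
=\sqrt{\frac{2n+1}{n+1}}\;
\exp\!\left\{-\frac{z^2}{2}\left(\frac{2n}{2n+1}-\frac{n}{n+1}\right)\right\}.
\]

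We treat the two factors separately. The algebraic factor satisfies $(2n+1)/(n+1)\to 2$, so its square root tends to $\sqrt{2}$. For the exponent, we compute the difference explicitly:
\[
\frac{2n}{2n+1}-\frac{n}{n+1}=\frac{2n(n+1)-n(2n+1)}{(2n+1)(n+1)}=\frac{n}{(2n+1)(n+1)}.
\]
This is of order $1/(2n)$ and tends to $0$. Since $z^2$ is a fixed finite constant, the exponent $-\tfrac{z^2}{2}\cdot\tfrac{n}{(2n+1)(n+1)}$ tends to $0$. By continuity of $\exp$, the exponential factor tends to $1$. Multiplying the two limits gives $\sqrt{2}$.

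The only point needing care is the reduction in the first step. The statement is about fixed $p$, not fixed $\bar y$. If $\bar y$ were held fixed instead, $z=\sqrt{n}\Delta/\sigma_0$ would grow, and $z^2\cdot O(1/n)$ would converge to a nonzero constant $\Delta^2/(4\sigma_0^2)$, so the limit would no longer be $\sqrt{2}$. It is therefore essential to hold $z$ fixed via the $p$-value. Beyond that, the argument is an elementary limit, and the cancellation in the exponent is a one-line computation.
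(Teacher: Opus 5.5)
Your proof is correct and follows the direct computation the paper evidently intends; the paper defers its own proof to the supplement. You note that fixing $p$ fixes $z$, that the prefactor $\sqrt{(2n+1)/(n+1)}\to\sqrt{2}$, and that the exponent difference $\frac{n}{(2n+1)(n+1)}=O(1/n)$ sends the exponential factor to $1$. Your side remark is also right: with $\bar y$ fixed instead, the limit becomes $\sqrt{2}\,e^{-\Delta^2/(4\sigma_0^2)}$, which shows why the fixed-$p$ hypothesis is essential.
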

\subsection{Normal Mean hypothesis test  with $\sigma_0$ unknown example (continuation) }
 With the priors mentioned above we can obtain meaningful marginals 
and the Bayes factor on $y(\ell)$ is 
\[
B_{01}^N\!\big(y(\ell)\big)
=\frac{m_0(y(\ell))}{m_1(y(\ell))}
=\frac{y_1-y_2}{2\sqrt{\pi}\,(y_1^2+y_2^2)}
=\frac{D^2}{2\sqrt{\pi}\,(y_1^2+y_2^2)}
\]
To compute the supremum on the correction factor we apply Cauchy-Swartz inequality and obtain 
\[
\sup_{y(\ell)} B_{01}^N\!\big(y(\ell)\big)=\frac{1}{\sqrt{\pi}},
\qquad\text{attained at }y_2=-y_1
\]
where $y^s(\ell)=(y_1,y_2=-y_1)$
\\
No we proceed to compute the LF priors. We center at $\mu_0$ and take the arg–sup set $y_2=-y_1+2\mu_0$,
so $\bar y(\ell)=\mu_0$ and $\sum_{i=1}^{\ell}(y_i-\mu_0)^2 \;=\; \frac{D^2}{2}$

Under $H_0$, 
\[
\pi_0^{\mathrm{LF}}(\sigma)
=\frac{f_0(y(\ell)\mid\sigma)\,\pi_0^N(\sigma)}{m_0^N(y(\ell))}
=\dfrac{D^2}{2}\sigma^{-3}\exp\!\Big(-\frac{D^2}{4\sigma^2}\Big)
\]
If we let $\tau=\sigma^2$, then $\tau \sim \text{Inv-Gamma}(1,D^2/4)$
\\
Under $H_1$,
\[
 \pi_1^{\mathrm{LF}}(\mu,\sigma)
=\frac{|D|}{\pi}\ \sigma^{-3}\exp\!\left\{-\frac{D^2}{4\sigma^{2}}-\frac{(\mu-\mu_0)^2}{\sigma^{2}}\right\} 
\]
\\
Now we compute the LF Bayes Factor.
Computing the marginals we obtain, 
\[
m_0^{\mathrm{LF}}(y)
= \frac{D^2}{2}\,(2\pi)^{-n/2}\,\frac{1}{2}\,\Gamma\!\Big(\frac{n+2}{2}\Big)\, (\tfrac{S_0^2}{2}+\tfrac{D^2}{4})^{-(n+2)/2}
\]
\[
 m_1^{\mathrm{LF}}(y)
= \frac{|D|}{\sqrt{\pi}}\,\frac{\sqrt{2}}{\sqrt{n+2}}\,(2\pi)^{-n/2}\,
\frac{1}{2}\,\Gamma\!\Big(\frac{n+1}{2}\Big)\, \bigg(\frac12\Big(S_1^2+\frac{2n}{n+2}(\bar y-\mu_0)^2\Big)+\frac{D^2}{4}\bigg)^{-(n+1)/2}
\]
The computation of $m_0^{LF}$ would be added it to the suplementary material. 
Using $S_0^2=S_1^2+n(\bar y -\mu_0)^2$ and Gamma identities, the $D$–terms cancel and
\begin{equation}
 B_{10}^{\mathrm{LF}}(y)=\frac{m_1^{\mathrm{LF}}(y)}{m_0^{\mathrm{LF}}(y)}
=\sqrt{\frac{2}{n}}\left(\frac{S_0^2}{S_1^2}\right)^{\!n/2}
\end{equation}

\subsubsection{LF Bayes Factor vs.\ Intrinsic Bayes Factor}
Expression (19) can be written in terms of the p-value using the $t$ distribution 
\[
B_{01}^{\mathrm{LF}}(p,n)
=
\sqrt{\frac{n}{2}}\,(1+x)^{-n/2},\quad x=\frac{t^2}{n-1}\ge 0,
\qquad
t=t^{-1}_{n-1}(1-p/2).
\]
while the intrinsic Bayes factor of \cite{berger1996intrinsic} is
\[
B_{01}^{\mathrm{IP}}(p,n)
\approx
\sqrt{2n}\,(1+x)^{-n/2}\,
\frac{x}{1-e^{-x}}.
\]
The following proposition shows a relationship betwen both Bayes Factors. Proof is avaible in the suplementary material. 
\begin{proposition}
For the normal mean testing problem with unknown variance,
the intrinsic and least favorable Bayes factors satisfy
\[
B_{01}^{\mathrm{IP}}(p,n)\;\ge\;2\,B_{01}^{\mathrm{LF}}(p,n).
\]
Moreover, equality holds only in the limit \(x\to 0\) (equivalently, \(t\to 0\)).
\end{proposition}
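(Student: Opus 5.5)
Take the ratio of the two displayed expressions directly. Both contain the factor $(1+x)^{-n/2}$, and $\sqrt{2n}/\sqrt{n/2}=2$, so
\[
\frac{B_{01}^{\mathrm{IP}}(p,n)}{B_{01}^{\mathrm{LF}}(p,n)}
= 2\,\frac{x}{1-e^{-x}}, \qquad x=\frac{t^2}{n-1}\ge 0 .
\]
The dependence on $n$ and on $p$ enters only through $x$. The proposition is therefore equivalent to the scalar inequality $g(x):=x/(1-e^{-x})\ge 1$ for $x>0$, together with $g(x)\to 1$ as $x\to 0^+$ and $g(x)>1$ for every $x>0$.

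To prove the scalar inequality, rewrite $g(x)\ge 1$ as $1-e^{-x}\le x$, which is the same as $e^{-x}\ge 1-x$. This is the tangent-line inequality for the convex function $e^{-x}$ at $0$. It is strict for $x\neq 0$ because $e^{-x}$ is strictly convex; alternatively, $h(x)=e^{-x}-1+x$ has $h(0)=0$ and $h'(x)=1-e^{-x}>0$ for $x>0$. Hence $g(x)>1$ for all $x>0$, and this gives $B_{01}^{\mathrm{IP}}>2B_{01}^{\mathrm{LF}}$ strictly whenever $t\neq 0$. For the limit, expand $1-e^{-x}=x-x^2/2+O(x^3)$. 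This gives $g(x)=1+x/2+O(x^2)\to 1$, so equality is attained only as $x\to 0$, that is, as $t\to 0$ (equivalently $p\to 1$). I would also note that $g$ is increasing, which shows the gap grows with $|t|$, though this is not needed for the statement.

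The main obstacle is not the inequality but its status. $B_{01}^{\mathrm{IP}}$ is given only as an approximation (``$\approx$''), so the proposition must be read as a statement about the displayed approximate form. I would state this explicitly at the start of the proof. One should also check that $x\ge 0$ is the only constraint: the admissible range of $x$ depends on $n$ and $p$, but the ratio depends on $x$ alone, so the bound holds uniformly in $n$ and $p$. Nothing else from the earlier sections is needed beyond the two closed forms, namely the LF Bayes factor $B_{10}^{\mathrm{LF}}=\sqrt{2/n}\,(S_0^2/S_1^2)^{n/2}$ rewritten through $S_0^2/S_1^2=1+t^2/(n-1)$, and the intrinsic Bayes factor expression quoted from \cite{berger1996intrinsic}.
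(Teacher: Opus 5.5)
Your proof is correct and takes the direct route the paper's two closed forms set up (the paper defers its own proof to the supplement): $(1+x)^{-n/2}$ cancels, $\sqrt{2n}/\sqrt{n/2}=2$, and everything reduces to $x/(1-e^{-x})\ge 1$, i.e.\ $e^{-x}\ge 1-x$, which is strict for $x>0$ and tends to equality only as $x\to 0^+$. One caveat, which you inherit from the statement: $x=t^2/(n-1)\to 0$ is equivalent to $t\to 0$ (or $p\to 1$) only for fixed $n$, because for fixed $p$ and $n\to\infty$ one also has $x\to 0$, so the ratio tends to $2$ even though $t$ does not tend to $0$.
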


Graph below shows comparison between $LF$ Bayes Factor, Intrinsic Bayes Factor and the $-eplogp$ bound. 

\begin{figure}[h]
     \centering
     \begin{subfigure}[b]{0.40\textwidth}
         \centering
         \includegraphics[width=\textwidth]{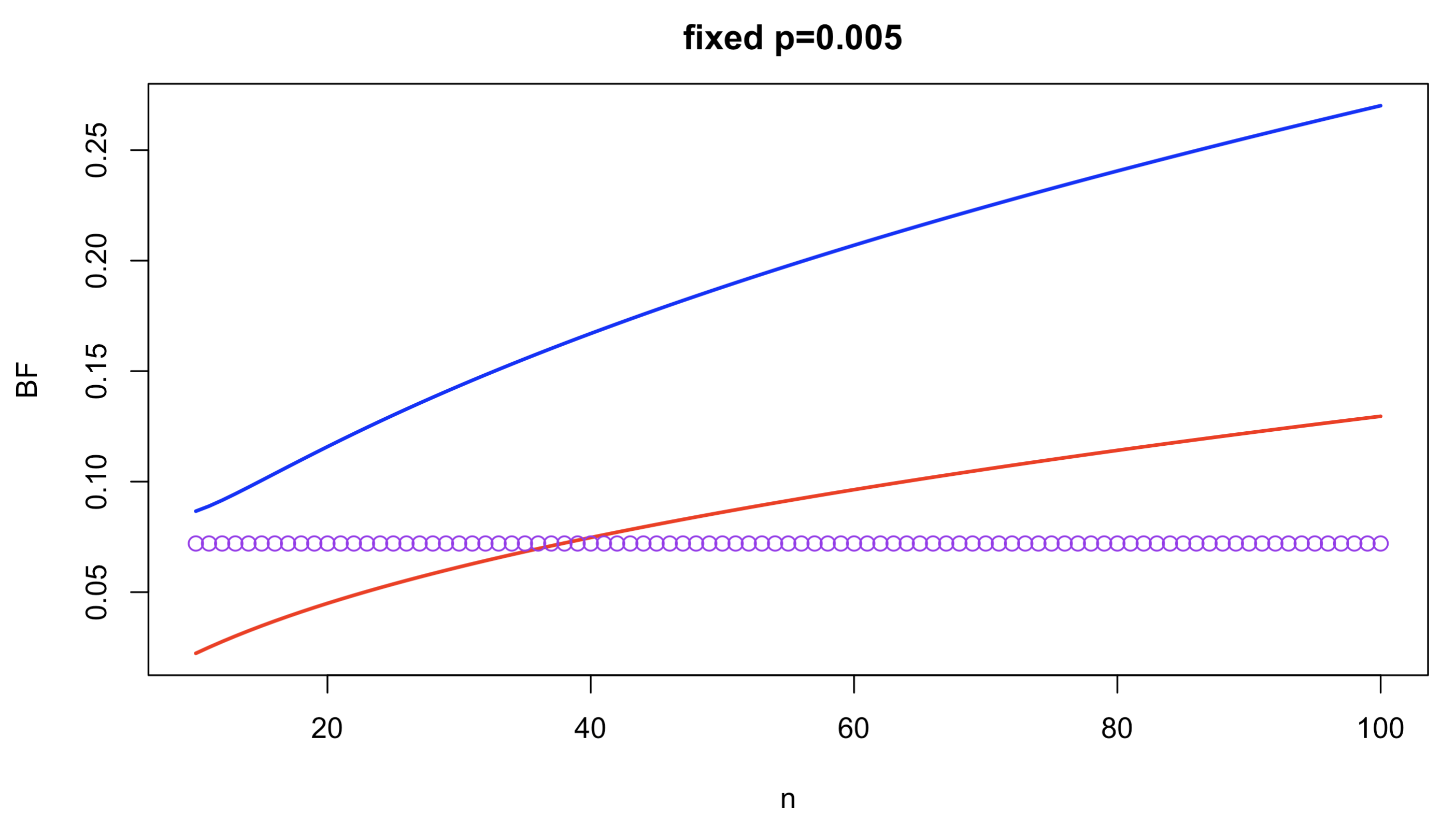}
         \caption{Fixed $p$, changing $n$.}
         \label{fig:graph1}
     \end{subfigure}
     \hfill 
     \begin{subfigure}[b]{0.40\textwidth}
         \centering
         \includegraphics[width=\textwidth]{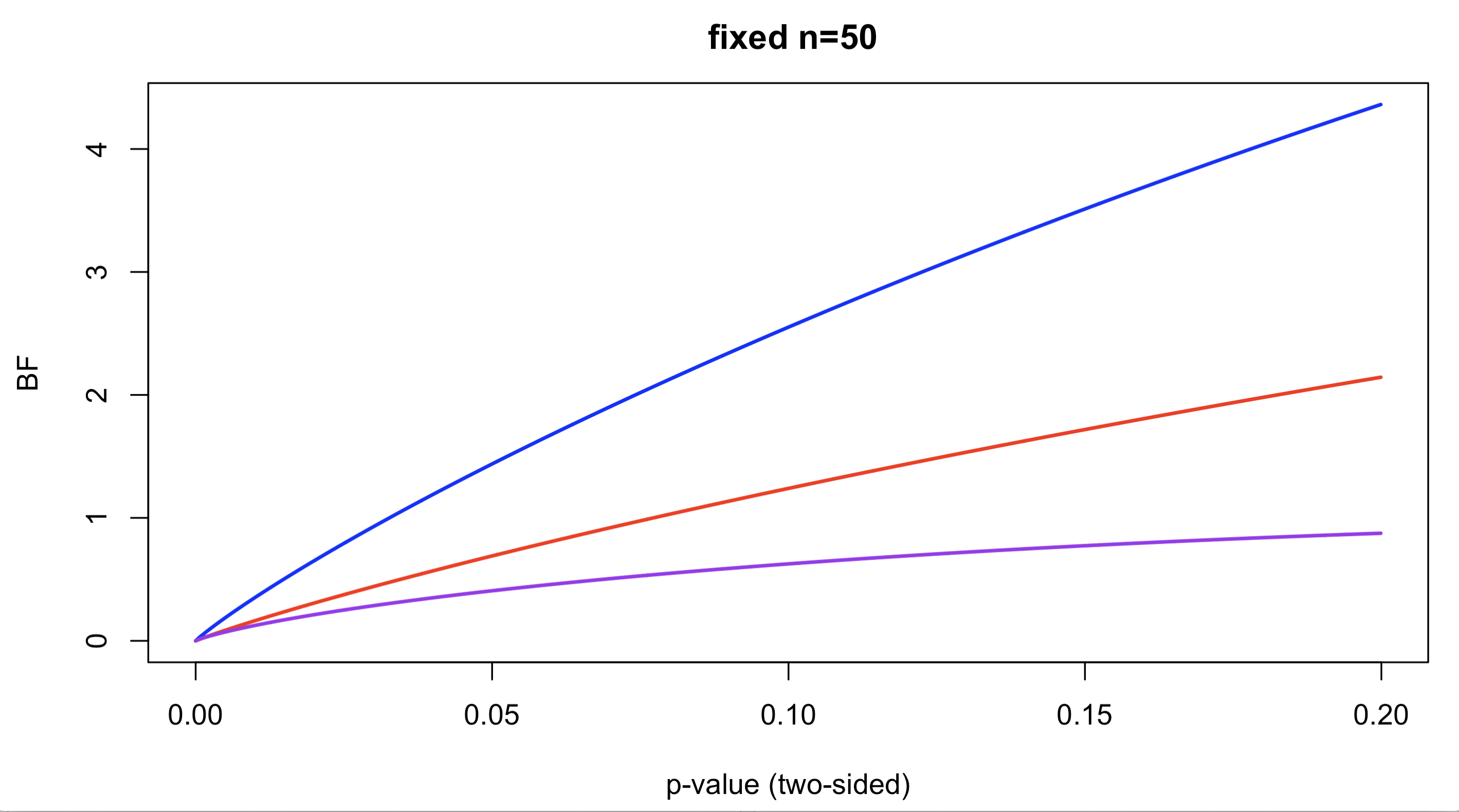}
         \caption{Fixed $n$, changing $p$.}
         \label{fig:graph2}
     \end{subfigure}
     
     \caption{Lower Bounds on the Bayes Factors: $-ep\log(p)$ (purple), $B_{01}^{LF}$ (red), and the intrinsic Bayes Factor (blue).}
     \label{fig:total_comparison}
\end{figure}
The next result shows that, for the normal mean test with unknown variance,
the least favorable Bayes factor reproduces \cite{schwarz1978estimating}  BIC approximation up to a
constant multiplicative factor. The results follows from the fact that BIC Bayes factor is given by $B_{01}^{\mathrm{BIC}}
\approx
\sqrt{n}\,
\Big(\frac{S_1^2}{S_0^2}\Big)^{n/2}$
\begin{proposition}
For the normal mean testing problem with unknown variance
the Schwarz approximation and the least favorable Bayes factor satisfy
\[
B_{01}^{\mathrm{BIC}}
\;\approx\;
\sqrt{2}\,B_{01}^{\mathrm{LF}}.
\]
\end{proposition}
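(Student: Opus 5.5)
Since both Bayes factors in the statement are already in closed form in terms of the same sufficient statistic, the plan is a direct comparison of these two expressions. From the computation of the LF marginals we have
\[
B_{01}^{\mathrm{LF}}=\sqrt{\frac{n}{2}}\left(\frac{S_1^2}{S_0^2}\right)^{n/2},
\]
the reciprocal of the expression for $B_{10}^{\mathrm{LF}}$ derived above. The Schwarz approximation quoted before the statement is
\[
B_{01}^{\mathrm{BIC}}\approx\sqrt{n}\,(S_1^2/S_0^2)^{n/2}.
\]
So the first step is to justify this BIC form for the present nested pair of models.

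Model $M_1$ has parameters $(\mu,\sigma)$ and $M_0$ has $\sigma$ alone, so $p_1-p_0=1$. The maximized likelihoods are
\[
\hat f_k\propto (S_k^2/n)^{-n/2}e^{-n/2},
\]
hence the ratio of maximized likelihoods is
\[
\hat f_0/\hat f_1=(S_1^2/S_0^2)^{n/2}.
\]
The Schwarz penalty contributes $n^{(p_1-p_0)/2}=\sqrt n$. This gives
\[
B_{01}^{\mathrm{BIC}}=\sqrt n\,(S_1^2/S_0^2)^{n/2}
\]
up to the $O(1)$ factor that Schwarz's approximation ignores, which is why the statement uses $\approx$.

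The second step is to take the ratio of the two expressions:
\[
\frac{B_{01}^{\mathrm{BIC}}}{B_{01}^{\mathrm{LF}}}=\frac{\sqrt n}{\sqrt{n/2}}=\sqrt2 .
\]
This holds exactly, identically in the data, because the data-dependent factors $(S_1^2/S_0^2)^{n/2}$ coincide. That yields $B_{01}^{\mathrm{BIC}}\approx\sqrt2\,B_{01}^{\mathrm{LF}}$, where the approximation sign is inherited from BIC.

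The main obstacle is the earlier simplification leading to $B_{10}^{\mathrm{LF}}$, which I take as given from the displayed computation. That simplification needs the $D$-terms to cancel and the Gamma ratio together with the factor $\sqrt{2/(n+2)}$ to reduce to $\sqrt{2/n}$. This cancellation itself holds only approximately for large $n$, using
\[
\Gamma\!\left(\tfrac{n+2}{2}\right)/\Gamma\!\left(\tfrac{n+1}{2}\right)\sim\sqrt{n/2}
\]
and $S_0^2+D^2/2\approx S_0^2$ relative to the scale $n$. If I wanted to be careful, I would redo that step with Stirling's formula and show that the relative error is $1+O(1/n)$. It would then follow that the $\sqrt2$ relation holds asymptotically as $n\to\infty$ with the $t$-statistic held fixed.
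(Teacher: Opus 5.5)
Your main line is the paper's own argument. The paper justifies the proposition only by quoting $B_{01}^{\mathrm{BIC}}\approx\sqrt{n}\,(S_1^2/S_0^2)^{n/2}$ and setting it beside $B_{01}^{\mathrm{LF}}=\sqrt{n/2}\,(S_1^2/S_0^2)^{n/2}$, so the ratio $\sqrt2$ follows identically in the data. Your derivation of the BIC form (maximized-likelihood ratio $(S_1^2/S_0^2)^{n/2}$ times $n^{(p_1-p_0)/2}=\sqrt n$) is a correct addition that the paper leaves implicit.

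Your closing diagnosis of the ``main obstacle'' is off, however, and the Stirling check you propose would not close. Dividing the displayed marginals gives
\[
B_{10}^{\mathrm{LF}}=\frac{2\sqrt{2}}{|D|\sqrt{\pi(n+2)}}\,\frac{\Gamma\bigl(\frac{n+1}{2}\bigr)}{\Gamma\bigl(\frac{n+2}{2}\bigr)}\,a_0^{1/2}\Bigl(\frac{a_0}{a_1}\Bigr)^{(n+1)/2},\qquad a_0=\frac{S_0^2}{2}+\frac{D^2}{4},
\]
where $a_1$ is the bracket in $m_1^{\mathrm{LF}}$.

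The approximation $S_0^2+D^2/2\approx S_0^2$ only handles the $D$ inside $a_0$ and $a_1$; the prefactor $1/|D|$ survives. Stirling then gives $B_{10}^{\mathrm{LF}}\sim\frac{2\hat\sigma_0}{\sqrt{\pi}\,|D|}\sqrt{2/n}\,(S_0^2/S_1^2)^{n/2}$ with $\hat\sigma_0^2=S_0^2/n$. This depends on the arbitrary scale $|D|$ along the arg-sup ray $y_2=-y_1+2\mu_0$, so you would not get $1+O(1/n)$.

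The real source of the trouble is the displayed $\pi_1^{\mathrm{LF}}$. The modified Jeffreys prior $\sigma^{-2}$ times the two-point likelihood gives a $\sigma^{-4}$ kernel with normalizing constant $D^2/(2\sqrt{\pi})$. The displayed $\frac{|D|}{\pi}\sigma^{-3}$ is instead the posterior under $\sigma^{-1}$. With this correction both marginals contain $\Gamma\bigl(\frac{n+2}{2}\bigr)$ and the prefactor $D^2/2$, these cancel exactly, and
\[
B_{10}^{\mathrm{LF}}=\sqrt{\frac{2}{n+2}}\left(\frac{S_0^2+D^2/2}{S_1^2+\frac{2n}{n+2}(\bar y-\mu_0)^2+D^2/2}\right)^{(n+2)/2}.
\]
This is precisely $B_{10}^N(y,y^*(\ell))\,B_{01}^N(y^*(\ell))$ from the expanded-sample proposition. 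It equals $\sqrt{2/n}\,(S_0^2/S_1^2)^{n/2}\,(1+O(1/n))$ for fixed $t$ and fixed $D/\hat\sigma_0$.

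So your conclusion that $B_{01}^{\mathrm{BIC}}\approx\sqrt2\,B_{01}^{\mathrm{LF}}$ holds asymptotically is right. The approximation, though, lives in the $D^2/2$ terms and the shift $n\to n+2$, not in a Gamma ratio.
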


\subsection{Application to a Real Data set}
We consider the classical sleep data analyzed by W.~S.~Gosset (``Student''), originally taken from \cite{cushny1905action}. In this experiment, the number of hours of sleep gained under two drugs (Dextro and Laevo hyoscyamine hydrobromide) was recorded for each patient. The quantity of interest is the paired difference in hours of sleep,
\[
d_i = (\text{Laevo})_i - (\text{Dextro})_i, \qquad i=1,\dots,10.
\]
These differences measure the additional sleep induced by the Laevo drug relative to the Dextro drug.

We test the point null hypothesis
\[
H_0:\mu_d=0 \qquad \text{vs} \qquad H_1:\mu_d\neq 0,
\]
where $\mu_d$ denotes the mean difference in hours of sleep.\\
The corresponding \(t\)-statistic is
$t\approx 4.062$ with two-sided \(p\)-value
$p \approx 0.00283$. The Least favorable Bayes factor in favor of \(H_0\) is
\[
B_{01}^{\mathrm{LF}}
=
\sqrt{\frac{n}{2}}
\left(\frac{S_1^2}{S_0^2}\right)^{n/2}
=
\sqrt{\frac{10}{2}}
\left(\frac{13.616}{38.58}\right)^5
\approx 0.01224.
\]
Using the intrinsic approximation 
we obtain
\[
B_{01}^{\mathrm{IP}}
\approx
\sqrt{20}\,(1+1.8334)^{-5}\,
\frac{1.8334}{1-e^{-1.8334}}
\approx 0.05344.
\]
Hence both Bayes factors provide strong evidence against \(H_0\). In fact LF Bayes factor can be used to reject $H_0$ since it its constructed from LF priors that are proper priors.

\subsection{One Way Anova (continuation)}
 As established, the extrema for the Bayes factor bounds are attained when the F-statistic of the training sample is zero i.e $F_{y^s(\ell)} = 0$  which implies that the between- roup sum of squares is exactly zero.
To derive the explicit forms of $\pi_0^{\mathrm{LF}}$ and $\pi_1^{\mathrm{LF}}$, we start from the  definition of the LF prior 
where $\pi_k^N(\theta_k, \sigma) \propto \sigma^{-(1+q_k)}$ is the generalized default prior. 

The likelihood of the training sample $y^s(\ell)$ under model $M_k$ is a multivariate normal distribution of dimension $n_{01} = m+1$. Using the standard orthogonal decomposition of the sum of squares, we can rewrite the exponent of the likelihood in terms of the minimal residual sum of squares and the least squares estimate $\hat{\theta}_k^*$
\begin{equation*} \label{likelihood_decomp}
f_k(y^s(\ell)\mid \theta_k, \sigma) \propto \frac{1}{\sigma^{m+1}} \exp\left\{ -\frac{1}{2\sigma^2} \Big[ \textbf{R}_k(y^s(\ell)) + (\theta_k - \hat{\theta}_k^*)^T \textbf{A}_k(\ell)^T \textbf{A}_k(\ell) (\theta_k - \hat{\theta}_k^*) \Big] \right\}
\end{equation*}

Recall that for the extreme training sample $y^s(\ell)$, the sample means of all $m$ groups are identical to a grand mean $\bar{y}^*$. Consequently, the residual sum of squares is minimized and identical for both models: $\textbf{R}_0(y^s(\ell)) = \textbf{R}_1(y^s(\ell)) = R^s$. \\
Under $M_0$,  the least squares estimate for the extreme training sample is the grand mean, $\hat{\mu}^* = \bar{y}^*$. The matrix $\textbf{A}_0(\ell)^T \textbf{A}_0(\ell)$ is a scalar equal to the size of the training sample, $n_{01} = m+1$.

Substituting and multiplying by the default prior $\pi_0^N(\mu, \sigma) \propto \sigma^{-(1+q_0)}$, we obtain
\[
\pi_0^{\mathrm{LF}}(\mu, \sigma) \propto \frac{1}{\sigma^{m + q_0 + 2}} \exp\left\{ -\frac{1}{2\sigma^2} \left[ R^* + (m+1)(\mu - \bar{y}^*)^2 \right] \right\}
\]
This is the kernel of a Normal-Inverse-Gamma distribution, it can be shown that gives the conditional prior $\mu \mid \sigma \sim N(\bar{y}^*, \sigma^2 / (m+1))$.
\\
Under $M_1$, the parameter vector is $\theta_1 = \boldsymbol{\mu} = (\mu_1, \dots, \mu_m)^T$. Because all group sample means in $y^s(\ell)$ are exactly $\bar{y}^*$, the least squares estimate vector is $\hat{\boldsymbol{\mu}}^* = \bar{y}^*\textbf{1}_m$, where $\textbf{1}_m$ is a vector of ones of length $m$.

The matrix $\textbf{A}_1(\ell)^T \textbf{A}_1(\ell)$ is a diagonal matrix containing the sample sizes of each group in the minimal training sample. Let $n_i(\ell)$ denote these sizes, where $n_i(\ell) = 1$ for the $m-1$ groups with a single observation, and $n_j(\ell) = 2$ for the single group with two observations. The quadratic form therefore simplifies to a sum of independent squared terms
$(\boldsymbol{\mu} - \hat{\boldsymbol{\mu}}^*)^T \textbf{A}_1(\ell)^T \textbf{A}_1(\ell) (\boldsymbol{\mu} - \hat{\boldsymbol{\mu}}^*) = \sum_{i=1}^m n_i(\ell)(\mu_i - \bar{y}^*)^2$

Again substituting this  and multiplying by the default prior we obtain
\[
\pi_1^{\mathrm{LF}}(\boldsymbol{\mu}, \sigma) \propto \frac{1}{\sigma^{m + q_1 + 2}} \exp\left\{ -\frac{1}{2\sigma^2} \left[ R^* + \sum_{i=1}^m n_i(\ell)(\mu_i - \bar{y}^*)^2 \right] \right\}
\]
This factorization implies that, conditionally on $\sigma$, the prior components $\mu_i$ are independent, yielding the multivariate normal prior where $\mu_i \mid \sigma \sim N(\bar{y}^*, \sigma^2/n_i(\ell))$ for all $i = 1, \dots, m$.

\section{Least Favorable Bayes Factors as Expanded Bounds}\label{sec12}

The key insight is the following relation:
\begin{equation}
B_{10}\bigl(y(-\ell)\mid y(\ell)\bigr)
\le
B_{10}^N(y)\max_{y(\ell)}B_{01}^N\!\bigl(y(\ell)\bigr)
\le
B_{10}^N(y)\sup_{y^*(\ell)}B_{01}^N\!\bigl(y^*(\ell)\bigr),
\end{equation}
 The first supremum is taken over training
samples extracted from the observed data, while the second is taken over the
full theoretical training-sample space.

If the extrema training sample is theoretical, we may formally expand the data
as if this sample had been observed. In that case, the posterior-prior
construction gives
\begin{equation}
B_{10}\bigl(y\mid y^*(\ell)\bigr)
=
B_{10}^N\bigl(y,y^*(\ell)\bigr)
\frac{m_0^N\!\bigl(y^*(\ell)\bigr)}
     {m_1^N\!\bigl(y^*(\ell)\bigr)}
=
B_{10}^N\bigl(y,y^*(\ell)\bigr)
B_{01}^N\!\bigl(y^*(\ell)\bigr).
\end{equation}
This expanded quantity is not necessarily equal to the original intrinsic
bound, since the latter uses \(B_{10}^N(y)\), whereas the expanded construction
uses \(B_{10}^N(y,y^*(\ell))\).

Now with the definition of the LF prior from Section~\ref{sec3}, if
\(y^*(\ell)\) is an extremal theoretical training sample we get the following proposition
\begin{proposition}
    If we assume conditional independence then 
    \[
    B_{10}^{LF}(y)=B_{10}(y|y^*(\ell))
    \]
\end{proposition}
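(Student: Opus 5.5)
We prove the identity by writing both sides as ratios of marginal likelihoods and checking that they agree; the whole argument is a Bayes-rule calculation for the prior $\pi_k^{\mathrm{LF}}$ of Section~\ref{sec3}. Let $y^*(\ell)=y^s(\ell)$ be the extremal theoretical training sample. \emph{Conditional independence} means that, given $\theta_k$, the data $y$ and $y^*(\ell)$ are independent under each model $M_k$, $k=0,1$:
\[
f_k\bigl(y,y^*(\ell)\mid\theta_k\bigr)=f_k(y\mid\theta_k)\,f_k\bigl(y^*(\ell)\mid\theta_k\bigr).
\]

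\textbf{Step 1 (left-hand side).} By definition, $m_k^{\mathrm{LF}}(y)=\int f_k(y\mid\theta_k)\,\pi_k^{\mathrm{LF}}(\theta_k)\,d\theta_k$. Substitute
\[
\pi_k^{\mathrm{LF}}(\theta_k)=\frac{f_k\bigl(y^*(\ell)\mid\theta_k\bigr)\,\pi_k^N(\theta_k)}{m_k^N\bigl(y^*(\ell)\bigr)} .
\]
Conditional independence merges the two likelihood factors into the joint likelihood, so
\[
m_k^{\mathrm{LF}}(y)=\frac{\int f_k\bigl(y,y^*(\ell)\mid\theta_k\bigr)\,\pi_k^N(\theta_k)\,d\theta_k}{m_k^N\bigl(y^*(\ell)\bigr)}=\frac{m_k^N\bigl(y,y^*(\ell)\bigr)}{m_k^N\bigl(y^*(\ell)\bigr)} .
\]
The quantity $m_k^N(y,y^*(\ell))$ is the (improper-prior) marginal of the expanded data set.

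\textbf{Step 2 (form the ratio).} Taking the ratio of the two models,
\[
B_{10}^{\mathrm{LF}}(y)=\frac{m_1^N\bigl(y,y^*(\ell)\bigr)}{m_0^N\bigl(y,y^*(\ell)\bigr)}\cdot\frac{m_0^N\bigl(y^*(\ell)\bigr)}{m_1^N\bigl(y^*(\ell)\bigr)}=B_{10}^N\bigl(y,y^*(\ell)\bigr)\,B_{01}^N\bigl(y^*(\ell)\bigr).
\]
This is exactly the expanded expression for $B_{10}(y\mid y^*(\ell))$ displayed just before the proposition. That expression is the analogue of the Basic Lemma of \cite{berger1996intrinsic}, with the imaginary sample appended to the data.

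\textbf{Step 3 (the undefined constants).} The main point to justify is that the arbitrary constants $c_0,c_1$ of the improper priors $\pi_k^N=c_k h_k$ cancel. Each $c_k$ appears once in $m_k^N(y,y^*(\ell))$ and once in $m_k^N(y^*(\ell))$, so it cancels inside $m_k^{\mathrm{LF}}$. Consequently $\pi_k^{\mathrm{LF}}$ is a proper prior, and both sides are well defined and free of $c_0/c_1$. The only condition needed is $0<m_k^N(y^*(\ell))<\infty$, which holds because $y^*(\ell)$ is a minimal training sample.

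\textbf{Expected obstacle.} The delicate step is the use of conditional independence: the factorization fails for models with dependence across observations, such as time series. One should also remark that the identity concerns $B_{10}(y\mid y^*(\ell))$ with the expanded $B^N_{10}(y,y^*(\ell))$, not the bound $UB_T$, which uses $B_{10}^N(y)$. As a sanity check, the formula can be verified in the known-variance normal mean example: with $y^*(\ell)=\mu_0$, the expanded computation reproduces $B_{10}^{\mathrm{LF}}=(n+1)^{-1/2}\exp\{n^2\Delta^2/(2\sigma_0^2(n+1))\}$.
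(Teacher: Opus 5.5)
Your proposal is correct and is essentially the paper's proof: substitute the definition of $\pi_k^{\mathrm{LF}}$ into $m_k^{\mathrm{LF}}(y)$, use conditional independence to merge $f_k(y\mid\theta_k)\,f_k(y^*(\ell)\mid\theta_k)$ into the joint likelihood, and take the ratio to obtain $B_{10}^N(y,y^*(\ell))\,B_{01}^N(y^*(\ell))$. Your additional remarks do not change the argument. These are the cancellation of $c_0,c_1$ (properness of $\pi_k^{\mathrm{LF}}$ comes from $0<m_k^N(y^*(\ell))<\infty$, not from that cancellation) and the known-variance sanity check, which matches the paper's final example.
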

\begin{proof}
\begin{equation*}
     B_{10}^{LF}(y) =\dfrac{\int f(y|\theta_1)\pi(\theta_1|y^*(\ell)) d\theta_1}{\int f(y|\theta_0)\pi(\theta_0|y^*(\ell)) d\theta_0}=\dfrac{\dfrac{\int f(y,y^*(\ell)|\theta_1)\pi^N(\theta_1)d\theta_1}{m_1^N(y^*(\ell))}}{\dfrac{\int f(y,y^*(\ell)|\theta_0)\pi^N(\theta_0)d\theta_0}{m_0^N(y^*(\ell))}}=\dfrac{m_1^N(y,y^*(\ell))}{m_0^N(y,y^*(\ell))} \cdot \dfrac{m_0^N(y^*(\ell))}{m_1^N(y^*(\ell))}
     \end{equation*}
\end{proof}
This important result states that the Least Favorable Bayes factor is equal to the 
expanded                    bound analogue of the intrinsic bound.
\subsection{Example:Normal Mean hypothesis test with $\sigma$ known (Continuation) }
If we compute the expaned Bayes Factor $B^N_{10}(y,\mu_0)$ we obtained 
\[
B_{10}^N(y,\mu_0)
=
\frac{\sqrt{2\pi}\sigma_0}{\sqrt{n+1}}
\exp\left\{
\frac{n^2}{2\sigma_0^2(n+1)}(\bar{y}-\mu_0)^2
\right\}.
\]
and if we multiply by $B_{01}^N(\mu_0)$ 
\[
\frac{\sqrt{2\pi}\sigma_0}{\sqrt{n+1}}
\exp\left\{
\frac{n^2}{2\sigma_0^2(n+1)}(\bar{y}-\mu_0)^2
\right\} \cdot \frac{1}{\sqrt{2\pi}\sigma_0}=\frac{1}{\sqrt{n+1}}
\exp\!\left\{\frac{n^2}{2\sigma_0^2(n+1)}\,(\bar{y}-\mu_0)^2\right\}
\]
which is the Least Favorable Bayes factor obtained beforehand.

\section{Conclusions}\label{sec13}
\begin{enumerate} 
\item Bounds in Testing Hypotheses are crucial for robust rejection of Null Hypotheses. If the bounds of the probabilities of the null are high, there is not enough evidence for rejection. Here we find bounds for all priors based on training samples.
\item The Bayesian Principle: Berger and Pericchi (1996) \cite{berger1996intrinsic}state that a statistical procedure is reasonable if i) there is a prior that generates it and ii) that prior is sensible. In this paper, we introduce the novel idea of a "Least Favorable Intrinsic Prior", which is proper and not too concentrated as unreasonable point masses. We also introduce the idea of an expanded sample, for which the imaginary training sample is added to the sample, and thus the bound becomes exact for the Least Favorable (Intrinsic) Prior.
 
\end{enumerate}

\backmatter

\bibliography{sn-bibliography}

\end{document}